\newtheorem{theorem}{Theorem}[section]
\newtheorem{proposition}[theorem]{Proposition}
\newtheorem{lemma}[theorem]{Lemma}
\def\pr{\textup{ P\/}}
\def\ex{\textup{E\/}}
\def\eps{\varepsilon}
\def\la{\lambda}
\def\a{\alpha}
\def\be{\beta}
\def\part{\partial}
\def\Cal{\mathcal}
\newcommand{\beq}{\begin{equation}}
\newcommand{\eeq}{\end{equation}}
\newtheorem{Theorem}{Theorem}[section]
\newtheorem{Lemma}[Theorem]{Lemma}
\theoremstyle{remark}
\numberwithin{equation}{section}
\def\bea{\begin{eqnarray}}
\def\eea{\end{eqnarray}}
\date{\today}
\begin{document}

\title[Sparse solutions] {On sparsity of the solution to a random quadratic optimization problem}

\maketitle
\authors{\begin{center}Xin Chen\footnote{Department of Industrial and Enterprise System Engineering, University of Illinois at Urbana-Champaign. Urbana, IL, 61801.  Email: xinchen@illinois.edu.} and Boris Pittel\footnote{Department of Mathematics, Ohio State University, Columbus, OH 43210. Email: bgp@math.ohio-state.edu.

{\bf 2010 MSC}: 49K45, 90C26

{\bf Key words}:  
standard quadratic programming, sparse solutions, random matrices, probability analysis.

}\end{center}}

\begin{abstract}
The standard quadratic optimization problem (StQP), i.e. the problem of minimizing
a quadratic form $\bold x^TQ\bold x$ on the standard simplex $\{\bold x\ge\bold 0: \bold x^T\bold e=1\}$, is studied. The StQP arises in numerous applications, and it is known to be NP-hard. The first author, Peng and Zhang~\cite{int:Peng-StQP}  showed that almost certainly the StQP with a large random matrix $Q=Q^T$, whose upper-triangular entries are
 i. i. concave-distributed, attains its minimum at a point with few positive components.
In this paper we establish sparsity of the solution for a considerably broader class of the distributions, 
including those supported by $(-\infty,\infty)$, provided that the distribution tail is (super/sub)-exponentially narrow, and also
for the matrices  $Q=(M+M^T)/2$, when $M$ is not symmetric. { The likely support size in those cases is shown to be polylogarithmic in $n$, the problem dimension.} Following~\cite{int:Peng-StQP} and Chen and Peng
~\cite{ChenPeng2015}, the key ingredients are the first and second order optimality conditions, and the integral bound for
the tail distribution of the solution support size. To make these results work for our goal, we obtain
a series of estimates involving, in particular, the random interval partitions induced by the order statistics of the elements $Q_{i,j}$.
\end{abstract}

\section{Introduction and main results} 
Bomze \cite{int:Bomze1} coined the term ``standard quadratic optimization problem'' (StQP) for the problem
\begin{align} \
&\min \bold x^TQ \bold x, \label{StQP} \\
&\text{ s.t. }\bold e^T\bold x=1, \quad \bold x\ge \bold 0\label{simplex},
\end{align}
where $Q=[Q_{ij}]\in \Re^{n\times n}$ is a symmetric matrix, and $\bold e\in \Re^n$ is the all $1$-vector. We will refer
to the set in \eqref{simplex} as the simplex $\Delta_n$.

The StQP appears in numerous applications such as
resource allocation~\cite{Ibaraki}, portfolio
selection~\cite{portfolioselect}, machine learning~\cite{Peng:MKL}, the maximal clique problem in discrete optimization
\cite{int:Gibbons}, and the determination of co-positivity of a matrix in
linear algebra~\cite{int:Bomze3}, etc. Since it is prototype
for numerous, more general, quadratic programming problems, it has been used to test various algorithms proposed in the literature (see \cite{int:Bomze3,int:Scozzari,
int:Yang} and the references therein for details).

Our subject in this paper is a random instance of the StQP,  where the symmetric matrix $Q$ is generated from a certain distribution. To put our work into perspective, { we note that} the study of optimization problems with random data can be traced back to early 1980s, e.g. Goldberg and  Marchetti-Spaccamela~\cite{int:Goldberg} (knapsack problem).  See Beier~\cite{int:Beier} for a more recent progress on random knapsack problems.  There has been 
made a significant progress in analysis of the so-called $L_1$ minimization problem with random constraints.
Notably it was proved that
when the coefficient matrix is generated from a normal distribution, then with high probability (whp), the optimal solution of the $L_1$ minimization problem is the sparsest point in the constrained set (see Cand\'es and Wakin~\cite{int:Candes1}, Cand\'es, Romberg and Tao~\cite{int:Candes2}, and Donoho~\cite{int:Donoho1}). 


{It is also important to note that in the optimization literature, when testing algorithms, it is not uncommon to generate optimization problem data randomly due to the lack of testing instances.  For example, Bomze and De Klerk \cite{int:Bomze02} and Bundfuss and D\"{u}r \cite{BundfussDur2009} generate StQPs with symmetric $Q$ whose entries are uniformly distributed. Thus, a good understanding of the behavior of the optimal solutions under randomly generated instances may shed light on the behaviors of various algorithms tested on these instances. Indeed, our results, together with those in \cite{int:Peng-StQP} and \cite{ChenPeng2015}, establishing the sparsity of the optimal solutions of randomly generated StQPs under quite general distribution assumptions, indicate that the performance of algorithms tested on these instances must be carefully analyzed before any general statement can be made. Interestingly, motivated by the sparsity of the optimal solutions, Bomze et al.\cite{BomzeSchachingerUllrich2017} construct StQP instances with a rich solution structure.  }

The first author, Peng and Zhang~\cite{int:Peng-StQP}, { prodded by a close relation between the StQP and the $L_1$ minimization problem and a keen interest in understanding randomly generated optimization problems,} proved that, as $n\to\infty$,  the random StQP whp has an optimal solution $\bold X^*$ with the number of non-zero components bounded in probability, provided
that the distribution $F$ of $Q_{i,j}$, ($i\le j$), is supported by $[A,B)$, with finite $A$, and $F$ is concave on $[A,B)$.
This family of distributions contains, for instance, the uniform distribution and the exponential distribution. However,
the concavity assumption excludes $A=-\infty$, whence the normal distribution was out.  In a follow-up 
work, Chen and Peng~\cite{ChenPeng2015} were still able to prove that for the GOE matrix $Q=(M+M^T)/2$, $M_{i,j}$ being i.i.d. normal, whp the minimum point $\bold X^*$ has at most {\it two\/} non-zero components, thus being almost 
an extreme vertex of the simplex. The key ingredient of the proof was an upper bound $e^{-n^2/4}$ for the probability that $n$-dimensional GOE matrix is positive, semi-definite (see Dean and Majumdar~\cite{int:PSD-GOE}).

The core analysis in \cite{int:Peng-StQP} is based on the estimates coming from the first-order optimality condition 
on $\bold X^*$ and some probabillistic bounds on the order statistics for the attendant random variables.
The further analysis in \cite{ChenPeng2015} relied {in addition} on the second-order optimality condition. 

These two optimality conditions, and a pair of the integral inequalities from \cite{int:Peng-StQP}, remain our tools in this paper as well.  Still we have to extend, considerably, the
probabilistic/combinatorial techniques to broaden the class of admissible distributions beyond concave ones, in order
to include the distributions with support going all the way to $-\infty$.

Our main results are as follows. Suppose that the cumulative distribution function (cdf) $F$ of the entries $Q_{i,j}$, $i\le j$, is
continuous. Let $\bold X^*$ denote a global minimum solution of the StQP with the random matrix $Q$. Let $K_n$
denote the support size for $\bold X^*$, i.e. $K_n=|\{j\in [n]:\,X_j^*>0\}|$.

\begin{Theorem}\label{A} {Let $\alpha>e\sqrt{2}$, and $k_n=\lceil \a n^{1/2}\rceil$. Then }
\[
\pr\{K_n\ge k_n\} =O\bigl(e^{\gamma(\alpha)n^{1/2}}\bigr), \quad \gamma(\a):=2\a\log(e\sqrt{2}/\a) < 0.
\]
So $K_n=O(n^{1/2})$ with probability sub-exponentially close to $1$.
\end{Theorem}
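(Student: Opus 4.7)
The plan is to combine the first-order KKT analysis of $\mathbf{X}^*$ with an integral bound in the spirit of \cite{int:Peng-StQP}, extended so as to handle the much thinner tails (now possibly on all of $\mathbb{R}$) by exploiting the random interval partition generated by the order statistics of the matrix entries. Fix $k\ge k_n$. By exchangeability of the i.i.d.\ family $(Q_{i,j})_{i\le j}$ and the almost-sure uniqueness of $\mathbf{X}^*$,
\[
\Pr(K_n = k) \;=\; \binom{n}{k}\,\Pr\bigl(\mathrm{supp}(\mathbf{X}^*)=[k]\bigr),
\]
so it suffices to bound the KKT event on the fixed block $[k]$: there exist $v\in\mathrm{int}\,\Delta_k$ and a scalar $m=\mathbf{X}^{*T}Q\mathbf{X}^*$ with $Q_{[k]}v=m\mathbf{e}$ and $\mathbf{e}^Tv=1$, plus the global inequalities $Q_{ii}\ge m$ for every $i\in[n]$ and $\langle Q_{j,[k]},v\rangle\ge m$ for every $j\notin [k]$.

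Next, I would follow the Chen--Peng--Zhang integral technique and condition on $(v,m)$ to peel off the factors. The three families of entries used in the three groups of constraints---(i) the block $Q_{[k]}$, (ii) the diagonals $Q_{jj}$ with $j\notin[k]$, and (iii) the off-block strips $(Q_{i,j})_{i\in[k],\,j\notin[k]}$---are mutually independent, so the conditional probability of all the outside constraints factors as $(1-F(m))^{n-k}\prod_{j\notin[k]}\Pr(\langle Q_{j,[k]},v\rangle\ge m \mid v)$. This leads to a bound of the form
\[
\Pr\bigl(\mathrm{supp}(\mathbf{X}^*)=[k]\bigr) \;\le\; \int \rho_k(m)\,dm,
\]
in which $\rho_k(m)$ collects the density of the KKT configuration on $[k]$, the diagonal tail factor, and the $(n-k)$-fold product of outside tail probabilities. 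For the concave distributions of \cite{int:Peng-StQP} one controls $\rho_k$ by a crude monotonicity; here, with $F$ possibly extending to $-\infty$, monotonicity fails and new estimates are needed.

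The technical heart is therefore a uniform bound on $\rho_k(m)$. The idea is to relax the hard equalities $(Q\mathbf{X}^*)_j=m$ on $[k]$ to small windows, and to use the order statistics of the $Q_{i,j}$'s to count configurations compatible with a prescribed $(v,m)$. These order statistics cut $\mathbb{R}$ into random intervals whose lengths feed the integrand, and one hopes for a factorial-type gain from the $k$ simultaneous stationarity equations. The target inequality, dictated by the rate in the theorem, is
\[
\Pr\bigl(\mathrm{supp}(\mathbf{X}^*)=[k]\bigr) \;\le\; \Bigl(\frac{Ce}{k}\Bigr)^k
\]
for an absolute constant $C$. Combined with $\binom{n}{k}\le (en/k)^k$, this gives $\Pr(K_n=k)\le (Ce^2n/k^2)^k$; substituting $k=\lceil\alpha\sqrt n\rceil$ turns the right-hand side into $\exp\!\bigl(\alpha\sqrt n\,\log(Ce^2/\alpha^2)\bigr)$, which matches $\exp(\gamma(\alpha)\sqrt n)$ with $\gamma(\alpha)=2\alpha\log(e\sqrt 2/\alpha)$ precisely when $C=2$.

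The main obstacle will be the estimate on $\rho_k(m)$: one must control the joint law of $(v,m)$ under the $k$ linear KKT equations and then bound the $(n-k)$-fold tail product uniformly in $(v,m)$ using only the continuity and thin-tail assumptions on $F$. The interval-partition viewpoint is intended to bypass controlling a joint density directly, instead reducing the task to a counting argument for how many order-statistic cells a $v$-weighted combination of independent copies of $Q$ can land in. Once the per-$k$ bound is in place, summation over $k\ge k_n$ is routine: the hypothesis $\alpha>e\sqrt 2$ forces $(e\sqrt 2/\alpha)^2<1$, so the tail is a geometric-like series dominated by its first term, yielding the advertised $O(e^{\gamma(\alpha)\sqrt n})$.
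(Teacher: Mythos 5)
Your proposal identifies the right scaling target, but it does not constitute a proof: the entire content of the argument would reside in the estimate $\Pr(\operatorname{supp}(\mathbf X^*)=[k])\le (Ce/k)^k$, and you explicitly leave that bound as a ``target inequality'' and call its justification ``the main obstacle.'' Nothing in the proposal actually establishes it. The first-order KKT decomposition you sketch --- condition on $(v,m)$, factor the outside constraints, control $\rho_k(m)$ via an interval-partition/counting argument --- is the machinery the paper deploys for Theorems~\ref{B}--\ref{D}, where it is fueled by explicit distributional hypotheses on $F$. For Theorem~\ref{A}, which assumes only that $F$ is continuous, that route has no obvious traction: the tail factor $(1-F(m))^{n-k}$ and the density $\rho_k(m)$ both depend on the shape of $F$, and there is no a priori reason a $v$-weighted combination of block rows should land in few order-statistic cells without further structure. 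So the proposal has a genuine gap, not just an omitted routine calculation.

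The paper's actual proof is both shorter and cleaner, and it rests on the \emph{second-order} optimality condition, which your proposal does not use at all. Condition C.2 of Proposition~\ref{thm:Optimality} says $Q_{\mathcal K}-\lambda^* E_{\mathcal K}\succcurlyeq 0$ on the support $\mathcal K$. Lemma~\ref{bound2} (from Chen--Peng) converts positive semidefiniteness of a $k\times k$ principal block into the purely combinatorial event $\bigcap_{i\ne j}\{Q_{i,j}\le\max(Q_{i,i},Q_{j,j})\}$, whose probability is $\le 2^k/(k+1)!$ for \emph{any} continuous $F$, with no distributional tuning. A union bound over the $\binom{n}{k}$ candidate supports then gives
\[
\Pr\{K_n=k\}\le \binom{n}{k}\frac{2^k}{(k+1)!}\le\left(\frac{2e^2 n}{k^2}\right)^{k},
\]
and substituting $k\ge\alpha\sqrt n$ and summing the geometric tail produces exactly $\gamma(\alpha)=2\alpha\log(e\sqrt 2/\alpha)$. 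Note that this is essentially the $\binom{n}{k}(2e/k)^{k}$ estimate you were aiming for, obtained at once from the PSD criterion rather than from a conjectural factorial gain in a first-order integral. If you want to rescue your approach you would need an analogue of Lemma~\ref{bound2} that extracts a $(Ce/k)^k$ bound from the first-order conditions alone, uniformly over all continuous $F$; I don't see how to do that, and it is precisely the step the second-order condition is there to replace.
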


The surprisingly short proof of this general claim follows from Theorem 3 in ~\cite{ChenPeng2015}. With additional, mildly
restrictive, constraints, we are able to reduce, significantly, the likely range of the support size.

\begin{Theorem}\label{B} (Left-bounded support.) Suppose that the c.d.f. $F(x)$ has a continuous density $f(x)$, and satisfies the following properties. 
\begin{itemize}
\item[(1)] { $f(x)>0$ for $x\in [A, B)$, $-\infty<A<B\le\infty$; $A=0$} without loss of generality; 
\item[(2)] There exists $\nu>0$ and $\rho>0$ such that
\begin{equation*}
F(x)=\rho x^{\nu} +O(x^{\nu+1}),\quad x\downarrow 0;
\end{equation*}
\item[(3)]
\begin{equation*}
\beta:=\sup\left\{\frac{f(x')}{f(x)}:\,x,x'\in (0,B),\,x'\in (x,2x)\right\}<\infty.
\end{equation*}
\end{itemize}
Then, for $k\le k_n$,
\begin{align*}
\pr\{K_n=k\}&\le \exp\bigl(-c(\max(\nu,1)) k+o(k)\bigr),\\
c(\mu)&:=\int_0^1\!\log(1+x^{\mu})\,dx=\sum_{j\ge 1}\frac{(-1)^{j-1}}{j(j\mu+1)}.
\end{align*}
\end{Theorem}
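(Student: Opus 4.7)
By exchangeability of the coordinates, $\pr\{K_n=k\}\le\binom{n}{k}\pr\{\mathrm{supp}(\bold X^*)=[k]\}$. First-order KKT on the event $\mathrm{supp}(\bold X^*)=[k]$ produces $\bold X^*\in\Delta_{k-1}^\circ$ and $\lambda>0$ such that $Q_{[k]}\bold X^*=\lambda\bold e$ and $\sum_{j\le k}Q_{ij}X_j^*\ge\lambda$ for each $i>k$. Since assumption (1) with $A=0$ forces $Q_{ij}\ge 0$, every summand in the equality lies in $[0,\lambda]$, yielding the pointwise bounds $Q_{ii}\le\lambda/X_i^*$ and, more generally, $Q_{ij}\le\lambda/\max(X_i^*,X_j^*)$.

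The plan is then to express $\pr\{\mathrm{supp}=[k]\}$ as an expectation over $Q_{[k]}$ and exploit the conditional independence of the constraints for $i>k$. Conditional on $Q_{[k]}$, the sums $\sum_{j\le k}Q_{ij}X_j^*$ for $i>k$ are i.i.d.\ with a common cdf $G_{\bold X^*}$ whose small-$t$ behavior, from (2) and the convolution of $k$ power-law densities, is $G_{\bold X^*}(t)\sim A_{\nu,k}\,t^{k\nu}\prod_j(X_j^*)^{-\nu}$, so
\[\pr\{\mathrm{supp}=[k]\}\le\E\bigl[(1-G_{\bold X^*}(\lambda))^{n-k}\,\mathbf 1_{\mathrm{KKT}}\bigr].\]
Next, change variables on $Q_{[k]}$ itself: replace the diagonals $(Q_{ii})_{i\le k}$ by $(X_1^*,\ldots,X_{k-1}^*,\lambda)$ via the KKT equalities $Q_{ii}=(\lambda-\sum_{j\ne i}Q_{ij}X_j^*)/X_i^*$, and work out the Jacobian, which factorizes as $\prod_i(X_i^*)^{-1}$ times a determinant expressible in terms of the off-diagonals and $\lambda$.

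Applying (2) to each $f(Q_{ii})$ factor and using (3) to extend the power-law estimate for the off-diagonals beyond a small neighborhood of $0$ without losing the leading constant, I would integrate out the $Q_{ij}$'s over $[0,\lambda/\max(X_i^*,X_j^*)]$, reducing the problem to an integral over $\Delta_{k-1}\times(0,\infty)$ of power-law factors in $\bold X^*$ and $\lambda$ multiplied by $(1-G_{\bold X^*}(\lambda))^{n-k}$. The crux is then a saddle-point/Laplace evaluation of this integral on the sorted simplex $X_{(1)}^*\le\cdots\le X_{(k)}^*$. The ``random interval partitions'' alluded to in the abstract make the successive ratios $r_i=X_{(i)}^*/X_{(i+1)}^*$ asymptotically independent uniforms on $(0,1)$, and the constant $c(\mu)$ emerges as the per-coordinate limit $\E[\log(1+r^\mu)]=\int_0^1\log(1+x^\mu)\,dx$ summed over the $k-1$ ratios. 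The case $\nu<1$ is handled via (3), which prevents the effective exponent from dropping below $1$ and thus yields $c(1)$ rather than $c(\nu)$, matching $\mu=\max(\nu,1)$. I expect the main obstacle to be coordinating the Jacobian computation with the order-statistic asymptotics so as to identify the precise constant $c(\mu)$, and to confirm that the $\binom{n}{k}$ factor is properly absorbed by the $(1-G_{\bold X^*}(\lambda))^{n-k}$ term into the $o(k)$ correction.
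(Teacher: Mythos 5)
Your proposal takes a fundamentally different route from the paper, and the key steps are asserted rather than carried out, with at least two genuine gaps.

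The paper does not take a union bound over the $\binom{n}{k}$ candidate support sets. Instead it exploits condition C.1 of Proposition \ref{thm:Optimality}: if $K_n = k+1$ then some row $i^*$ of the principal submatrix $Q_{\mathcal K}$ has arithmetic mean below $\min_i Q_{ii}$, and this is union-bounded over the $n$ rows only (Lemma \ref{bound1}). That collapses the task to estimating the single probability $\pr\{\bar W_k \le k V_1\} = \ex[(1-G(\bar W_k/k))^n]$, where $\bar W_k$ is the sum of the $k$ smallest among the $n-1$ off-diagonal entries of one fixed row. Your scheme instead uses exchangeability to pull out $\binom{n}{k}\approx(en/k)^k$, so your per-support probability $\pr\{\mathrm{supp}=[k]\}$ must decay like $(k/(en))^k e^{-ck}$ for the final bound to hold. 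You assert that the $(1-G_{\bold X^*}(\lambda))^{n-k}$ factor will supply this extra $n^{-\Theta(k)}$ decay, but you do not show it, and it is precisely the hard part: your small-$t$ asymptotic $G_{\bold X^*}(t)\sim A_{\nu,k}\,t^{k\nu}\prod_j(X_j^*)^{-\nu}$ holds only in the $t\to 0$ regime of the $k$-fold convolution, which is the wrong scale here (at the relevant $\lambda$, of order $F^{-1}(1/n)$, the sum has a diffuse distribution, not a $t^{k\nu}$ tail), and the Jacobian of your change of variables from $(Q_{ii})$ to $(\bold X^*,\lambda)$ is left as a black box even though the off-diagonals appear in the constraints.

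Second, the explanation you give for where $c(\mu)=\int_0^1\log(1+x^\mu)\,dx$ comes from is not what happens in the paper and is not supported by an argument. The paper proves Lemma \ref{F(F^{-1}/k)}, giving the deterministic lower bound $\phi(\bold u)\ge\sum_j\gamma_j u_j$ with $\gamma_j=\gamma\bigl[(1-\tfrac{j-1}{k})^\nu-(1-\tfrac{j}{k})^\nu\bigr]$ when $\nu\ge 1$, and then evaluates the truncated expectation $E_{n,k}$ by $k$ successive integrations by parts over the simplex $0\le u_1\le\cdots\le u_k\le\delta_n$. The telescoping product $\prod_j\tfrac{n-1-k+j}{2n-1-k+j}\prod_j\bigl(1-\tfrac{n}{2n-1-k}(1-\gamma(j/k)^\nu)\bigr)^{-1}$ gives, upon taking logarithms and a Riemann sum, $c(\nu)=\log 2+\int_0^1\log\bigl(\tfrac12+\tfrac{x^\nu}{2}\bigr)\,dx$. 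The ``random interval partitions'' in the abstract refer to the spacings of the uniform order statistics $U_j=F(W_j)$, used e.g.\ in Lemma \ref{A1<S<A2}; they are not the successive ratios $X^*_{(i)}/X^*_{(i+1)}$ of the solution vector, and no claim is made that those ratios are asymptotically independent uniforms. Finally, your proposal never addresses the $\hat\rho(n,k)$ term, which is where the dominated-variation condition (3) is actually put to work in the paper (via Lemma \ref{beta(k)<}); in your outline condition (3) is invoked only vaguely to ``extend the power-law estimate,'' which does not match its actual role.
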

\noindent {\bf Notes.\/} {\bf (i)\/} So $\pr\{K_n>k\}$ decays exponentially fast long before $k$ gets as large as $k_n$. 
{In fact, the exponential decay holds much longer, up until $k$ is, say, of order $n\log^{-2} n$.} That is, in probability, $K_n$ is very much bounded. 
{\bf (ii)\/} The uniform distribution and the exponential distribution  are covered by this theorem: $\nu=1$, $\beta=1$ for the former, and $\nu=2$, $\beta=1$ for the latter.  Notice also that the leading term $\rho x^{\nu}$ in the condition (2) is {concave for $\nu\le 1$}. The local nature
of this condition makes Theorem \ref{B} substantially more general than Theorem 3.4 in \cite{int:Peng-StQP}, proved
for $F(x)$ concave everywhere on the support of the density $f$, whence for the two distributions mentioned above.
{\bf (iii) \/} The condition (3) is an extension of the notion of a ``dominatedly varying'' {\it monotone\/} function introduced and
studied by Feller \cite{Fel1}, \cite{Fel2} (Ch. 8, Exer. 33).

While the class of distributions covered by Theorem \ref{B} is quite broad, it does not contain, for instance, the
normal distribution (supported by $(-\infty,\infty)$), which is predominantly assumed in the extensive literature on the $L_1$ minimization problem and its various generalizations. 
With the normal distribution in our peripheral vision, we introduce a class of distributions supported { by $(-\infty,B)$, $B\le\infty$, such that, for $x\to-\infty$,
\begin{equation}\label{F"norm"}
F(x)= (c+O(|x|^{-\kappa})) |x|^a \exp(-r |x-x_0|^b),\quad b,c,r,\kappa>0;
\end{equation}
shifting/scaling $x$ we make $r=1$, $x_0=0$.} It is well-known that the  normal distribution meets \eqref{F"norm"} with  
$a=-1$, $b=2$. Among other examples are the two-sided exponential density ($a=0$, $b=1$),
and the $\cosh$-density ($a=0$, $b=1$). As in Theorem \ref{B}, this condition restricts the behavior of the cdf
$F(x)$ in the vicinity of a {\it single\/} point, which is $-\infty$ this time. 
{
\begin{Theorem}\label{C} Let $b>1$. 
Then,  for all $k\le k_n$, we have
\[
\pr\bigl\{K_n\ge k\bigr\}\le_b n \left(\frac{8}{9}\right)^{k/4}+n\exp\Biggl(-\frac{k\left(\log\frac{n}{k}\right)^{\min\{0,a/b\}}}{2e}\Biggr);
\]
the symbol $\le_b$ means that the probability is below the RHS times a bounded coefficient.
Consequently $K_n=O_p(\log n)$ for $a\ge 0$, meaning that $\pr(K_n > \omega(n)\log n)\to 0$ for
{\bf every\/}  $\omega(n)\to\infty$ {\bf however
slowly\/}), and $K_n=O_p\bigl((\log n)^{1+|a|/b}\bigr)$ for $a<0$.
\end{Theorem}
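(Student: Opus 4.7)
The plan is to apply the integral-bound framework of \cite{int:Peng-StQP, ChenPeng2015} to the KKT conditions at a minimizer $\bold X^*$ with support $S$, $|S|=k$. The first-order conditions produce a Lagrange multiplier $\lambda$ such that $(Q\bold X^*)_i = \lambda$ for $i\in S$ and $(Q\bold X^*)_j \ge \lambda$ for $j\notin S$; the second-order condition says $Q_{S\times S}$ is positive semi-definite on the tangent subspace $\{v:\sum_{i\in S} v_i = 0\}$, which yields in particular $Q_{ii}+Q_{jj}\ge 2Q_{ij}$ for all $i,j\in S$. The target bound splits into two parts matching the two terms on the right-hand side: a distribution-free piece from the second-order constraint, and a tail-dependent piece from the first-order condition for $j\notin S$.

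The first piece $n(8/9)^{k/4}$ I expect to come from a distribution-free, symmetry-based argument using only the second-order constraint together with exchangeability of the iid entries of $Q$. The exponent $k/4$ strongly suggests partitioning $S$ (or a subset of it) into $\lfloor k/4\rfloor$ disjoint quadruples and bounding, on each $4\times 4$ submatrix, the exchangeability-probability of the corresponding constraint by $8/9$; independence across disjoint quadruples then multiplies these out. The combinatorial factor $\binom{n}{k}$ must be absorbed into the single $n$ prefactor, presumably by fixing one distinguished index (e.g.\ $\arg\min_i Q_{ii}$) rather than applying a blanket union bound, mirroring a technique from \cite{ChenPeng2015}.

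The second piece comes from the first-order condition for $j\notin S$: conditional on $Q_{S\times S}$, hence on $\bold X^*$ and $\lambda$, the columns $(Q_{ij})_{i\in S}$ for $j\notin S$ are independent iid $F$-vectors, so the probability that all $n-k$ of them satisfy $\sum_i Q_{ij}X_i^*\ge\lambda$ is at most $(1-\delta(\lambda))^{n-k}$, where $\delta(\lambda)=\pr\bigl(\sum_i Z_i X_i^*<\lambda\bigr)$ and the $Z_i$ are iid copies of $F$. A quantile analysis based on the tail \eqref{F"norm"} shows that $|\lambda|$ is of order $(\log(n/k))^{1/b}$ at the optimum; feeding this into $F(x)\sim c|x|^a\exp(-|x|^b)$ and tracking the polynomial prefactor $|x|^a$ through the small-ball estimate produces the $(\log(n/k))^{\min(0,a/b)}$ correction in the exponent, matching the stated bound. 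The consequences $K_n=O_p(\log n)$ for $a\ge 0$ and $K_n=O_p((\log n)^{1+|a|/b})$ for $a<0$ follow by solving for the threshold at which $n$ times the respective exponential factor becomes $o(1)$.

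\textbf{Main obstacle.} The hardest step is the tail-dependent bound, specifically the quantile-level control of $\delta(\lambda)$ uniformly over the random range of $\lambda$. Tracking the polynomial factor $|x|^a$ from \eqref{F"norm"} through the small-ball estimate requires a precise asymptotic for $F^{-1}(p)$ as $p\downarrow 0$, including the sub-leading $\log\log$ correction; the case $a<0$ is the most delicate because the polynomial factor then thins the effective tail still further, and a stratification of $\lambda$ by magnitude is needed to preserve uniformity over the full range $k\le k_n$. A secondary difficulty is the combinatorial bookkeeping that turns $\binom{n}{k}$ into the single $n$ prefactor, which relies on the second-order constraint being strong enough to dwarf the binomial factor on all but one privileged column.
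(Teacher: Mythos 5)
Your decomposition misattributes where the two terms on the right-hand side come from, and this is a real gap, not a cosmetic one. In the paper, \emph{both} terms arise from the analysis of a single object, $\pr\{\bar W_k\le kV_1\}$, which encodes the first-order condition through Lemma \ref{bound1} (Property C.1 gives a row of $Q_{\mathcal K}$ whose mean is below $\min_i Q_{i,i}$, and the union bound over the $n$ rows is what produces the single prefactor $n$, not a distinguished-index trick). After passing to the integral formula \eqref{key} one introduces $S(\bold U)=k^{-1}\sum_{j\le k}\log(1/U_j)$, where $U_1<\cdots<U_k$ are the first $k$ order statistics of $(n-1)$ uniforms. The $(8/9)^{k/4}$ term is the bound from Lemma \ref{A1<S<A2} on $\pr\{S(\bold U)>\log\tfrac{n}{\a k}\}$, obtained by a Chernoff-type estimate on the moment generating function of $\log(1/U_j)$, with the parameters $\a=\tfrac{2}{3e}$, $\beta=\tfrac14$ plugged in: $\bigl(\a e/(1-\beta)\bigr)^{\beta k}=(8/9)^{k/4}$. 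It has nothing to do with the second-order condition, exchangeability of $4\times 4$ blocks, or any distribution-free symmetry argument. Your quadruple-partitioning scheme would not produce this constant, and in any case the second-order condition is only used in this part of the paper (via Theorem \ref{A}) to dispose of the range $k>k_n$, not to generate either term for $k\le k_n$.

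The second term is closer in spirit to what you describe, but the mechanism is different. Rather than conditioning on $Q_{S\times S}$, on $\lambda$, and on $\bold X^*$ — which would re-introduce a union over $\binom{n}{k}$ supports that your "absorb into $n$" step does not actually remove — the paper proves a \emph{deterministic} lower bound (Lemma \ref{normal}): when $u_k\le\delta_n$, $\phi(\bold u)\gtrsim S(\bold u)^{\min\{0,a/b\}}e^{-S(\bold u)}$, using the tail asymptotic \eqref{F(-infty)} for $F^{-1}$ and the concavity of $y\mapsto y^{1/b}$ (this is where $b\ge 1$ enters). On the event $\{S(\bold U)\le \log\tfrac{n}{\a k}\}$ this gives $\phi(\bold U)\gtrsim \tfrac{k}{(3/2)en}(\log\tfrac{n}{k})^{\min\{0,a/b\}}$, hence $(1-\phi(\bold U))^n$ is bounded by the second exponential. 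Combining with Lemma \ref{A1<S<A2} and then multiplying by $n$ via \eqref{P(Kn=k+1)<rho(n,k)} finishes the proof. Your small-ball idea for $\sum_i Z_iX_i^*$ is not wrong in spirit, but it operates at the level of rows of $Q$ against a random $\bold X^*$, which is a much harder conditioning problem than the order-statistics reduction the paper actually performs; the key lemma you would need — controlling $\delta(\lambda)$ uniformly in the random $\lambda$ while avoiding the $\binom{n}{k}$ factor — is exactly what Lemma \ref{bound1} sidesteps, and you have not supplied it.
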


\begin{Theorem}\label{D} Let $b\le 1$. Define
\[
\sigma(a,b)=\left\{\begin{aligned}
&1+\frac{1+2a}{b},&&\text{ if }a>0,\\
&1+\frac{1+|a|}{b},&&\text{ if }a\le 0;
\end{aligned}\right. 
\]
so $\sigma(a,b)>2$.  For every $\sigma>\sigma(a,b)$, and $d<\frac{b(\sigma-\sigma(a,b))}{2}$,
\[
\pr\{K_n > \log^{1+d} n\}< \exp\bigl(-\log^{1+d} n\bigr).
\]
\end{Theorem}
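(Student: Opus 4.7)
The plan is to follow the integral--tail framework used for Theorem~\ref{C} (originating in~\cite{int:Peng-StQP}), adapted to the sub-exponential regime $b\le 1$. By exchangeability,
\[
\pr\{K_n=k\}\le\binom{n}{k}\pr\{\operatorname{supp}(\bold X^*)=[k]\},
\]
and conditioning on $Q_{[k],[k]}$ the first-order KKT conditions pin down $\bold X^*_{[k]}$ and the Lagrange multiplier $\lambda$ via $Q_{[k],[k]}\bold X^*_{[k]}=\lambda\bold e$, $\bold e^T\bold X^*_{[k]}=1$. The off-support rows $Q_{j,[k]}$, $j\notin[k]$, are independent of $Q_{[k],[k]}$ and each must satisfy $\sum_{i\in[k]}Q_{j,i}X^*_i\ge\lambda$, which together with the second-order positivity condition produces a bound
\[
\pr\{K_n=k\}\le\binom{n}{k}\int\rho_k(\bold x,\lambda)\bigl(1-G_{\bold x}(\lambda)\bigr)^{n-k}\,d\bold x\,d\lambda,
\]
where $\rho_k$ is the on-support density and $G_{\bold x}(\lambda):=\pr\{\sum_i x_iQ_i\le\lambda\}$ for $Q_i$ i.i.d.\ from $F$.

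The crucial estimate is a lower bound on $G_{\bold x}(\lambda)$ as $\lambda\to-\infty$. For $b\le 1$ the left tail of $F$ is (essentially) sub-exponential, so the ``single big jump'' principle gives $G_{\bold x}(\lambda)\gtrsim\sum_{i\in[k]}F(\lambda/x_i)$. Substituting \eqref{F"norm"} and using that $t\mapsto t^{-b}$ is convex on $(0,\infty)$ for $0<b\le 1$, the worst case of this lower bound over $\bold x\in\Delta_k$ is attained at the uniform $\bold x=(1/k)\bold e$, yielding $G_{\bold x}(\lambda)\gtrsim k\cdot(k|\lambda|)^{a}\exp(-(k|\lambda|)^{b})$ up to constants. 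After rescaling by $k$ the polynomial prefactor contributes a $k$-power of $-a$ (if $a\ge 0$) or $|a|$ (if $a<0$), which is the origin of the two branches in the definition of $\sigma(a,b)$.

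Combining $\binom{n}{k}\le\exp(k\log(en/k))$, the bound on $\rho_k$ extracted from the joint density of the smallest order statistics of $Q_{[k],[k]}$ (using the density asymptotics $f(x)\asymp|x|^{a+b-1}e^{-|x|^b}$ that follow from \eqref{F"norm"}), and $(1-G_{\bold x})^{n-k}\le\exp(-(n-k)G_{\bold x})$, the log of the integrand at the critical $|\lambda|^b\asymp k^{-b}\log(n/k)$ reduces to a competition between $k\log n$ and a positive power of $\log n$; a routine computation shows that the combinatorial factor loses this competition exactly when $k\gtrsim(\log n)^{\sigma(a,b)-1}$. Choosing $k=\log^{1+d}n$ with $d<b(\sigma-\sigma(a,b))/2$ then makes the net exponent at most $-\log^{1+d}n$, and summing over $k>\log^{1+d}n$ costs only a polynomial factor absorbed into the super-polynomial decay. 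The hardest step will be to make the big-jump lower bound on $G_{\bold x}(\lambda)$ uniform in $\bold x$: $\rho_k$ is not concentrated at the uniform weight vector, so one must invoke the second-order positivity condition to rule out $\bold x$'s with one or two dominant weights behaving like an effective support of constant size---refining the spreading argument of~\cite{ChenPeng2015} so that it remains quantitative at the poly-logarithmic scale of $k$.
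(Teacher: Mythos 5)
Your plan takes a genuinely different route from the paper, and the differences are where the trouble lies.

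The paper's proof does \emph{not} work with the optimal weights $\bold x$ or the Lagrange multiplier $\lambda$ at all. Its key step is Lemma~\ref{bound1}, a clean consequence of the first-order condition C.1: if $K_n = k+1$ then some row of the relabeled $Q$ has $k$ off-diagonal entries plus its diagonal entry summing to less than $(k+1)V_1$. This immediately gives $\pr\{K_n = k+1\} \le n\,\pr\{\bar W_k \le kV_1\}$, and then the whole problem reduces to the single scalar functional $\phi(\bold U) = F\bigl(\tfrac1k\sum_j F^{-1}(U_j)\bigr)$ via the identity \eqref{key}. The rest is order-statistics analysis of $\bold U$ (Karlin--Taylor exponential representation, Chernoff bounds for $T_1$ and $T_2$). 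Notice that the uniform weight $\tfrac1k\bold e$ appears \emph{automatically} here, not as a worst case of a variational problem over $\Delta_k$.

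Your proposal instead conditions on $Q_{[k],[k]}$, pins down $(\bold x,\lambda)$ from the KKT system, and integrates $(1-G_{\bold x}(\lambda))^{n-k}$ against ``the on-support density $\rho_k$.'' Three concrete problems arise. First, $\rho_k(\bold x,\lambda)$ is the pushforward of the $\binom{k+1}{2}$-dimensional density of $Q_{[k],[k]}$ through the nonlinear map $Q_{[k],[k]}\mapsto(\bold x,\lambda)$ defined by inverting the random system $Q_{[k],[k]}\bold x = \lambda\bold e$; this density is not something one can write down, let alone bound sharply, and the paper carefully avoids ever touching it. Second, you need a quantitative single-big-jump \emph{lower} bound on $G_{\bold x}(\lambda)=\pr\{\sum_i x_i Q_i\le\lambda\}$ that is uniform over $\bold x\in\Delta_k$ and over $k$ of polylogarithmic size; the usual subexponential asymptotics are for fixed weights as $\lambda\to-\infty$, and you correctly flag this as ``the hardest step'' but do not address it. The convexity-of-$t^{-b}$ argument that the uniform $\bold x$ is extremal is a heuristic that would need a real proof here, precisely because the prefactor $|\lambda/x_i|^a$ is not monotone in the same direction for $a$ of both signs. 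Third, your explanation that the two branches of $\sigma(a,b)$ come from ``a $k$-power of $-a$ (if $a\ge 0$) or $|a|$ (if $a<0$)'' misses where the asymmetry actually comes from in the paper: it is the factor $2$ in $\sigma(a,b) = 1+(1+2a)/b$ for $a>0$, and that $2$ arises because the analysis must hold up to $k=k_n=\lceil\alpha n^{1/2}\rceil$, so the relevant scale is $\log(n/k)\ge\tfrac12\log n$, doubling the cost of the $\tfrac{a}{b}\log S(\bold u)$ correction in \eqref{P(|X|^b<)}. Your prefactor calculation gives the same $a/b$ power for either sign and cannot reproduce $2a$ vs.\ $|a|$.

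In short: the framework you describe is a valid general-purpose strategy, but you bypass the reduction that makes the problem tractable, and the parts you gesture at (uniform control of $\rho_k$ and of $G_{\bold x}$ over the simplex) are where the real mathematical work would be. The paper's Lemma~\ref{bound1} is exactly the device that eliminates both difficulties, and any proof of Theorem~\ref{D} that does not use it or a substitute for it has to solve a harder problem than the one the theorem poses.
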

}
{\bf Note.\/} Therefore, with probability $> 1 - O(n^{-L})$, ($\forall\,L>0$),  $K_n$ 
is below $\log^{1+d} n$. Using the term coined in Knuth, Motwani and Pittel~\cite{KnuMotPit}, ``quite
surely'' (q.s.) the support size is of a moderate (poly-logarithmic) order. 

Turn to the alternative model: first randomly generate an $n\times n$ matrix $M$ whose elements are i.i.d.
random variables with the cdf $G$; then define $Q=(M+M^T)/2$. 

Suppose $G$ satisfies
the (one-point) condition \eqref{F"norm"}, and $x_0=0$, $r=1$, without loss of generality.  The diagonal entries of $Q$ have distribution $G$ and the non-diagonal
entries of $Q$ have the distribution $F(x)= (G\star G)(2x)$, $\star$ standing for convolution. We prove that, for $a>-1$ when
$b\le 1$,  $F$ satisfies the equation \eqref{F"norm"}
as well, {with the parameters $b'=b$, $c'>0$, $r'=2^{\text{min}(1,b)}$ and
\begin{equation*}
a'=\left\{\begin{aligned}
&2a+\frac{b}{2},&&\text{ if }\, b>1,\\
&a+b-1,&&\text{ if }\,0<b<1,\\
&2a+1,&&\text{ if }\,b=1.
\end{aligned}\right.
\end{equation*}
Since $r'>1=r$, }we have $\lim_{x\to -\infty}G(x)/F(x)=
\infty$. Combining this fact and the general identity proved in ~\cite{int:Peng-StQP} (the proof of Theorem 2.2), we easily transfer the proof
of our Theorems \ref{C} and \ref{D} to this model. To state the resulting claims one has only to
replace $a$ with $a'$ in Theorems \ref{C} and \ref{D}. 

We note that the theorems above have the natural counterparts for the problem $\max\{\bold x^TQ\bold x:\,\bold x\text{ meets }
\eqref{simplex}\}$: the distribution $F$ of $Q_{i,j}$ is supported by $(A,\infty)$, $A\ge -\infty$, and the restrictions are imposed on the behavior of $F(x)$ in the vicinity of $\infty$. Since $-Q$ meets the conditions of the respective claim {for the
support $(-\infty,-A)$,} no additional proof is needed.
{So, for the {\it quasi\/}-normal distribution, i.e. $a=-1$, $b=2$, both the minimum point and the maximum point are sparse, with the likely support size of order $(\log n)^{3/2}$ in each case. As we mentioned, Chen and Peng~\cite{ChenPeng2015} proved  that for the GOE matrix $Q=(M+M^T)/2$, $M_{i,j}$ being i.i.d. {\it exactly\/} normal, whp the support size of $\bold X^*$  is $2$, at most.}

{
To conclude the introduction, we mention that thirty years earlier Kingman \cite{Kin} initiated the study of {\it local\/} maxima of
the random quadratic forms $\bold p^T F\bold p$ on the simplex $\Delta_n$, with $\bold p$ interpreted as the distribution of 
the alleles $A_1,\dots, A_n$ at a single locus, and $F_{i,j}\in [0,1]$ as the (random) fitness, i.e.  the probability that the gene pair $(A_i,A_j)$ survives to a reproductive age.  Kingman's main interest was potential coexistence of several alleles at the locus, i.e. of locally stable
distributions (local maxima) $\bold p$ with a sizable support. Concurrently, in the context of evolutionary stable strategies, Haigh \cite{Hai1}, \cite{Hai2} established
the counterparts of some of Kingman's results for a non-symmetric payoff matrix; see a more recent paper
Kontogiannis and Spirakis \cite{KonSpi}. The second author of the current paper showed that for a broad class of  
the fitness distributions, the likely support of a local maximum point $\bold p$ not containing a support of a local {\it equilibrium\/} is $\lceil (2/3)\log_2n\rceil$, at most. And, for the uniform fitnesses, there are likely many {\it potential\/} local maxima supports free of local equilibriums, each of size close to $\lceil(1/2)\log_2n\rceil$, \cite{Pit}. Conjecturally the likely size of the largest support of a local maximum is polylogarithmic in $n$.}


{The paper is organized as follows. In Section \ref{sec:preliminaries}, we provide some preliminaries useful for our analysis. In Section \ref{sec:proofs}, we present the proofs of our key results, {and finish the paper with some concluding remarks in }Section \ref{sec:conclusion}. }



\section{Preliminaries}\label{sec:preliminaries}
The analysis of the random StQP in \cite{int:Peng-StQP} began with the formulation and the proof of the following optimality
conditions.
 Given $\bold x\in \Delta_n$, denote $k(\bold x)=|\{j\in [n]: x_j>0\}|$.
\begin{proposition}
\label{thm:Optimality}
	Suppose that $\bold x^*$ is an optimal solution of the problem (\ref{StQP}-\ref{simplex}) satisfying $k(\bold x^*)=k>1$.
	So $\lambda^*:=(\bold x^*)^T Q \bold x^*$ is the absolute minimum of the quadratic form on the simplex $\Delta_n$.
	Denoting $\Cal K=\{j\in [n]: x^*_j>0\}$, let $Q_{\Cal{K}}$
	be the principal $k\times k$ submatrix of $Q$ induced by the elements of the set $\Cal K$. Then
	\begin{itemize}
		\item[C.1]  there exists a row (whence a column) of $Q_{\Cal{K}}$ such that the arithmetic mean of all its elements is 
		(strictly) less than $\min_{j\in [n]} Q_{j,j}$;
		\item[C.2] with $E_{\Cal K}(i,j):=1_{\{i,j\in \Cal K\}}$, $Q_{\Cal{K}}-\lambda^* E_{\Cal K}$ is positive               semidefinite; in short, $Q_{\Cal{K}}-\lambda^* E_{\Cal K}\succcurlyeq \bold 0$.
	\end{itemize}
\end{proposition}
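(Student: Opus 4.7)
The plan is to derive both conditions from standard Lagrangian analysis applied to the minimizer $\bold x^*$ on the simplex: condition C.1 from the first-order (KKT) stationarity, and condition C.2 from the second-order necessary condition restricted to the active face $\mathcal{K}$. Setting up KKT with a multiplier $\lambda$ for $\bold e^T \bold x = 1$ and $\bold \mu \geq \bold 0$ for $\bold x \geq \bold 0$, stationarity $2 Q \bold x^* = \lambda \bold e + \bold \mu$ together with complementary slackness $\mu_j x^*_j = 0$ yields, after dotting with $\bold x^*$, the identity $\lambda = 2\lambda^*$. Hence $(Q\bold x^*)_j = \lambda^*$ for every $j \in \mathcal{K}$ and $(Q\bold x^*)_j \geq \lambda^*$ for $j \notin \mathcal{K}$; evaluating $\bold x^T Q \bold x$ at each vertex $\bold e_j$ adds the global bound $\lambda^* \leq m := \min_\ell Q_{\ell,\ell}$.

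For C.1, I would sum the stationarity identity $\sum_{j \in \mathcal{K}} Q_{i,j} x^*_j = \lambda^*$ over $i \in \mathcal{K}$ and swap the order of summation, rewriting it as $\sum_{j \in \mathcal{K}} C_j x^*_j = k\lambda^*$, where $C_j := \sum_{i \in \mathcal{K}} Q_{i,j}$ is the $j$-th column sum of $Q_{\mathcal{K}}$. Dividing by $k$ exhibits $\lambda^*$ as a convex combination, with positive weights $x^*_j$ summing to $1$, of the column means $C_j/k$, so at least one column mean satisfies $C_j/k \leq \lambda^*$. Symmetry of $Q$ identifies column and row sums in $Q_{\mathcal{K}}$, transferring the bound to a row; combined with $\lambda^* \leq m$, some row of $Q_{\mathcal{K}}$ has arithmetic mean at most $m$.

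For C.2, I would first establish that $Q_{\mathcal{K}}$ is positive semidefinite on the tangent subspace of the face: for $\bold y$ supported on $\mathcal{K}$ with $\bold e^T \bold y = 0$, the perturbation $\bold x^* + \epsilon \bold y$ remains in $\Delta_n$ for all sufficiently small $|\epsilon|$ (because $x^*_j > 0$ for $j \in \mathcal{K}$), so optimality forces $2\epsilon \bold y^T Q \bold x^* + \epsilon^2 \bold y^T Q_{\mathcal{K}} \bold y \geq 0$. The linear term is zero because $(Q \bold x^*)_j = \lambda^*$ on $\mathcal{K}$ combined with $\bold e^T \bold y = 0$ gives $\bold y^T Q \bold x^* = \lambda^*(\bold e^T \bold y) = 0$; letting $\epsilon \to 0$ leaves $\bold y^T Q_{\mathcal{K}} \bold y \geq 0$. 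To upgrade to the full semidefiniteness of $Q_{\mathcal{K}} - \lambda^* E_{\mathcal{K}}$, I decompose an arbitrary $\bold z$ supported on $\mathcal{K}$ as $\bold z = c\,\bold x^*_{\mathcal{K}} + \bold y$ with $c := \bold e^T \bold z$ and $\bold e^T \bold y = 0$; using $Q_{\mathcal{K}} \bold x^*_{\mathcal{K}} = \lambda^* \bold e_{\mathcal{K}}$, a short expansion gives $\bold z^T Q_{\mathcal{K}} \bold z = c^2 \lambda^* + \bold y^T Q_{\mathcal{K}} \bold y$ while $\bold z^T E_{\mathcal{K}} \bold z = c^2$, so $\bold z^T (Q_{\mathcal{K}} - \lambda^* E_{\mathcal{K}}) \bold z = \bold y^T Q_{\mathcal{K}} \bold y \geq 0$, as required.

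The delicate point is the strict inequality in C.1: the convex-combination argument produces only $\leq$, and promoting it to $<$ requires either $\lambda^* < m$ or the observation that equality throughout forces $\bold x^*$ to coincide with the uniform distribution on $\mathcal{K}$ \emph{and} every column sum $C_j$ to equal $k\lambda^*$ --- a degenerate algebraic coincidence that fails almost surely under the continuous distribution assumed throughout the paper. The remainder is a routine application of KKT and of the second-order test on the active face of the simplex.
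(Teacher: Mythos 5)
The paper does not prove Proposition~\ref{thm:Optimality} itself; it cites~\cite{int:Peng-StQP} and only remarks that C.1 and C.2 come from the first- and second-order optimality conditions, which is exactly the route you take, so your approach is the intended one. Your derivation of C.2 is complete and correct: KKT stationarity gives $(Q\bold x^*)_j=\lambda^*$ on $\mathcal K$, the active-face second-order condition gives $\bold y^TQ_{\mathcal K}\bold y\ge 0$ on $\{\bold y:\bold e^T\bold y=0\}$, and the decomposition $\bold z=c\,\bold x^*_{\mathcal K}+\bold y$ with $c=\bold e^T\bold z$ cleanly transfers this to $Q_{\mathcal K}-\lambda^*E_{\mathcal K}\succcurlyeq 0$, since $\bold z^TE_{\mathcal K}\bold z=c^2$ and $\bold z^TQ_{\mathcal K}\bold z=c^2\lambda^*+\bold y^TQ_{\mathcal K}\bold y$.

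On C.1 you correctly obtain the non-strict bound and correctly flag that strictness is the delicate point, but your description of the equality case is slightly off. Equality throughout does \emph{not} force $\bold x^*$ to be the uniform vector on $\mathcal K$; since $\lambda^*=\sum_{j\in\mathcal K}x_j^*(C_j/k)$ with all weights positive, having $\min_j C_j/k\ge\lambda^*$ forces $C_j=k\lambda^*$ for \emph{every} $j\in\mathcal K$, and one additionally needs $\lambda^*=m$. That pair of conditions (all row/column sums of $Q_{\mathcal K}$ equal, together with $\lambda^*=m$) is the actual degenerate event, and it is indeed a null event under the continuous-distribution hypothesis, which is all that is used downstream (the proof of Lemma~\ref{bound1} only needs the inequality to hold for the purpose of a union bound, and in fact $\le$ would suffice there). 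As a deterministic statement the strict inequality can genuinely fail, e.g.\ for the all-ones $2\times 2$ matrix, where every row mean of $Q_{\mathcal K}$ equals $\min_j Q_{j,j}$; so the a.s.\ reading you adopt is the right one, just with the corrected characterization of the exceptional event.
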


Properties C.1 and C.2 follow, respectively, from the first-order optimality condition and the second-order optimality condition.

Consider the random symmetric matrix $\{Q_{i,j}\}$: (1) the diagonal entries $Q_{i,i}$ are independent, each having the same, continuous, distribution $G$; (2) the above-diagonal elements $Q_{i,j}$, $(i<j)$, are independent of each other, and of the diagonal elements, each having the same, continuous, distribution $F$; (3) the below-diagonal elements $Q_{i,j}$ are set equal  to $Q_{j,i}$. 

If we relabel the elements of $[n]$ to make $Q_{1,1}<Q_{2,2}<\cdots<Q_{n,n}$, then the above-diagonal elements in the new $n\times n$ array will remain independent of each other and of the diagonal entries, that now form the sequence $V_1,\dots, V_n$ of the {\it order statistics\/} for $n$ independent variables, each with distribution $G$.  

Let us use the capital $\bold X^*$ to denote the solution of the random StQP problem. Let $K_n$ denote the support size of  $\bold X^*$.
Property C.1 was used in \cite{int:Peng-StQP} to prove the following, crucial, estimate.
\begin{Lemma}\label{bound1} Let $V_1<V_2<\cdots< V_n$ ($W_1<W_2<\cdots<W_{n-1}$ resp.) denote the order statistics of the sequence of $n$ ($n-1$ resp.) independent, $G$-distributed ($F$-distributed resp.) random variables. Assume that $\bold V:=(V_1,\dots,V_n)$
and $\bold W:=(W_1,\dots,W_{n-1})$ are independent of each other. Then, for $k\ge 1$,
\begin{align*}
&\qquad\qquad \pr\{K_n=k+1\}\le \rho(n,k),\\
&\rho(n,k):=\sum_{i=1}^n\!\pr \bigl\{\bar W_k\le (k+1)V_1-V_i\bigr\},\quad \bar W_k:=\sum_{j=1}^{k} W_j.
\end{align*}
\end{Lemma}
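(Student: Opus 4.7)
The plan is to combine the first-order condition C.1 with a union bound over the candidate ``deficient'' row index. Condition on the event $\{K_n=k+1\}$ and let $\mathcal{K}\subseteq[n]$, $|\mathcal{K}|=k+1$, denote the support of $\bold X^*$. By C.1, there exists some index $i\in\mathcal{K}$ such that
\[
\frac{1}{k+1}\Bigl(Q_{i,i}+\sum_{j\in\mathcal{K}\setminus\{i\}}Q_{i,j}\Bigr)<\min_{\ell\in[n]}Q_{\ell,\ell}=V_1,
\]
which, recalling $Q_{i,i}=V_i$ after the relabeling, rewrites as
\[
\sum_{j\in\mathcal{K}\setminus\{i\}}Q_{i,j}<(k+1)V_1-V_i.
\]

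Now I would observe that the sum on the left is a sum of $k$ of the $n-1$ off-diagonal entries $\{Q_{i,j}:j\in[n]\setminus\{i\}\}$, which (conditionally on the diagonal) are i.i.d.\ with distribution $F$. Hence the left-hand side is bounded below by the sum of the $k$ smallest of these off-diagonal entries in row $i$, call them $W_1^{(i)}<\cdots<W_{n-1}^{(i)}$, and $\bar W_k^{(i)}:=\sum_{j=1}^k W_j^{(i)}$ has the same distribution as $\bar W_k$. Crucially, the vector $(W_1^{(i)},\dots,W_{n-1}^{(i)})$ is built only from off-diagonal entries, hence independent of the diagonal order statistics $(V_1,\dots,V_n)$; in particular, $\bar W_k^{(i)}$ is independent of the pair $(V_1,V_i)$.

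Therefore, on $\{K_n=k+1\}$ there exists $i\in[n]$ with $\bar W_k^{(i)}\le (k+1)V_1-V_i$, and a union bound over the $n$ possible values of $i$ (together with the distributional equality $\bar W_k^{(i)}\stackrel{d}{=}\bar W_k$ and independence from $(V_1,V_i)$) yields
\[
\pr\{K_n=k+1\}\le\sum_{i=1}^n\pr\bigl\{\bar W_k^{(i)}\le(k+1)V_1-V_i\bigr\}=\sum_{i=1}^n\pr\bigl\{\bar W_k\le(k+1)V_1-V_i\bigr\}=\rho(n,k).
\]

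The main subtlety is the last step: justifying that the probability involving $\bar W_k^{(i)}$ really equals the one involving $\bar W_k$, in spite of the fact that $V_1$ and $V_i$ are themselves order statistics and may coincide when $i=1$. This is handled by the independence of the off-diagonal array from the diagonal, which is preserved by the relabeling (a measurable function of the diagonal alone), so conditioning on any joint distribution of $(V_1,V_i)$ leaves $\bar W_k^{(i)}$ with its unconditional distribution. Everything else is bookkeeping.
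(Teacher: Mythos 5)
Your proof is correct and follows essentially the same route as the paper's own argument: invoke C.1 on the submatrix $Q_{\mathcal{K}}$ to get a row $i$ with small mean, lower-bound the sum of the $k$ off-diagonal entries in that row by the sum of its $k$ smallest off-diagonal entries, use independence of the off-diagonal array from the (relabeled) diagonal, and then apply a union bound over $i\in[n]$. The subtlety you flag about the distributional replacement $\bar W_k^{(i)}\stackrel{d}{=}\bar W_k$ conditionally on $(V_1,V_i)$ is handled identically in the paper, which notes explicitly that the $\bold W(i)$ are equidistributed and independent of $\bold V$ (though not of each other), which is all the union bound requires.
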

\begin{proof} (For completeness.) Consider $Q$ obtained from the initial $\{Q_{i,j}\}$ via the above relabeling, so that now $Q_{i,i}$ is the $i$-th
smallest among the diagonal entries. For each $i\in [n]$, let $\bold W(i)=(W_1(i)<\cdots< W_{n-1}(i))$ stand for the $(n-1)$ order statistics of
the non-diagonal  entries in the $i$-th row of the transformed $Q$. $\bold W(1),\dots, \bold W(n)$ are equi-distributed, 
independent of $\bold V$, {\it though not of each other\/} since the matrix is symmetric. By the property C.1, there exists a row $i^*$ in $Q$ such the sum of some $k+1$ entries in this row, that
includes its diagonal entry, is below $(k+1)\min_i Q_{i,i}=(k+1)V_1$. This sum is certainly not smaller than
\[
\sum_{j=1}^k W_j(i^*)+V_{i^*}=\bar W_k(i^*)+V_{i^*}. 
\]
For a generic row $i\in [n]$,
\[
\pr\bigl\{\bar W_k(i)+V_i\le (k+1)V_1\}=\pr\bigl\{\bar W_k+V_i\le (k+1)V_1\}.
\]
Applying the union bound we complete the proof.
\end{proof}
In the case $k=n$ Property C.2 was utilized in \cite{ChenPeng2015} to prove 
\begin{Lemma}\label{bound2} For $n\ge 2$,
\[
\pr\left\{K_n=n\right\}\le\pr\left\{\bigcap_{i\neq j\in [n]}\bigl\{Q_{i,j}\le \max(Q_{i,i},Q_{j,j})\bigr\}\right\}\le \frac{2^n}{(n+1)!}.
\]
\end{Lemma}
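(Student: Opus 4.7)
The plan is to prove the two inequalities separately: the first is an algebraic consequence of the second-order condition C.2 in Proposition~\ref{thm:Optimality}, while the second is a clean combinatorial recursion on $n$. Denote the event on the right-hand side by $\mathcal{E}:=\bigcap_{i\ne j}\{Q_{i,j}\le\max(Q_{i,i},Q_{j,j})\}$. For the first inequality: on $\{K_n=n\}$ we have $\mathcal K=[n]$, so C.2 forces $Q-\lambda^* E_{[n]}\succcurlyeq\mathbf{0}$; hence every $2\times 2$ principal submatrix is PSD, which gives $Q_{i,i},Q_{j,j}\ge\lambda^*$ and
\[
(Q_{i,j}-\lambda^*)^2\le(Q_{i,i}-\lambda^*)(Q_{j,j}-\lambda^*)\le\bigl(\max(Q_{i,i},Q_{j,j})-\lambda^*\bigr)^2.
\]
Taking square roots yields $Q_{i,j}\le\max(Q_{i,i},Q_{j,j})$ for every pair, so $\{K_n=n\}\subseteq\mathcal{E}$.

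For the second inequality, set $P_n:=\pr(\mathcal{E})$. The key observation is that under $\mathcal{E}$ the overall maximum among the $N:=\binom{n+1}{2}$ entries $\{Q_{i,j}:i\le j\}$ must be a diagonal entry; otherwise an off-diagonal $Q_{i,j}$ would strictly exceed both $Q_{i,i}$ and $Q_{j,j}$, contradicting the defining inequality of $\mathcal{E}$ for the pair $(i,j)$. Under the setup (implicit in \cite{ChenPeng2015}, whence this lemma is imported) in which all $N$ above-or-on-diagonal entries are exchangeable, this yields $\pr(\text{global max is a diagonal entry}) = n/N = 2/(n+1)$. Conditioning on $Q_{j^*,j^*}$ being the global maximum, the edge constraints incident to $j^*$ hold automatically, and the remaining constraints reduce to $\mathcal{E}$ for the principal submatrix indexed by $[n]\setminus\{j^*\}$; since those $N-1$ entries, conditional on being below the maximum, retain the uniform relative-order distribution, this conditional probability is $P_{n-1}$. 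The resulting recursion
\[
P_n = \frac{2}{n+1}\,P_{n-1},\qquad P_1=1,
\]
unrolls to $P_n = 2^n/(n+1)!$.

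The main subtlety is the exchangeability step of the recursion. In the fully i.i.d.\ case the bound is in fact an equality (one can verify $P_2=2/3$ and $P_3=1/3$ directly from the definition), so no slack is being wasted in the argument. For the broader model of the paper in which the diagonal law $G$ and the off-diagonal law $F$ may genuinely differ, one would need a stochastic-domination or coupling reduction to the symmetric case before invoking the recursion; here the lemma is applied in its originating i.i.d.\ form.
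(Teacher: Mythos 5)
The paper does not prove this lemma itself but imports it from Chen and Peng \cite{ChenPeng2015}, so there is no in-text proof to compare with; taken on its own, your argument is correct. Reading off the $2\times 2$ minors of $Q-\lambda^* E_{[n]}\succcurlyeq \bold 0$ gives the first inclusion cleanly, and the recursion $P_n=\tfrac{2}{n+1}P_{n-1}$ obtained by conditioning on which of the $\binom{n+1}{2}$ upper-triangular entries is the global maximum is tight: as you point out, it actually proves the stated bound with equality. The one caveat that deserves to be stated more forcefully than your closing remark does is that the second inequality genuinely requires $G=F$, not merely as a convenience. The recursion is really a statement about the uniformly random ranking of $\binom{n+1}{2}$ exchangeable continuous variables; if the diagonal and off-diagonal laws differ, that ranking is no longer uniform and the step $\pr\{\text{global max is diagonal}\}=n/\binom{n+1}{2}$ is false. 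In fact the inequality itself fails without $G=F$: take $G$ uniform on $[1,2]$ and $F$ uniform on $[0,1]$, so that $\pr\{\mathcal{E}\}=1$ while $2^n/(n+1)!\to 0$. This restriction is consistent with how the lemma is used in the paper (Section~\ref{sec:proofs} opens with the standing assumption ``Let $G=F$''), but it is invisible in the bare statement of Lemma~\ref{bound2}, and your proof would not go through without it. Subject to that standing assumption, the proof is complete and correct.
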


Define 
\[
\rho(n,k)=\pr\bigl\{\bar W_k\le kV_1\}+\hat{\rho}(n,k), 
\]
\[
\hat{\rho}(n,k):=\sum_{i=2}^n\pr\{\bar W_k\le (k+1)V_1-V_i\}.
\]
{Notice that $\hat{\rho}(n,k)\le (n-1)\pr\bigl\{\bar W_k\le kV_1\}$, since $V_i\ge V_1$ for $i\ge 2$. Therefore, by Lemma 
\ref{bound1},
\begin{equation}\label{P(Kn=k+1)<rho(n,k)}
\pr\{K_n=k+1\}\le n \pr\bigl\{\bar W_k\le kV_1\}.
\end{equation}
}
Using the classic formula for the joint distribution of the order statistics, they found in \cite{int:Peng-StQP} (see (11), and
 the top identity in the proof of Theorem 2.2 therein) the multi-dimensional integral representations of the functions $\rho(\cdot)$ and $\hat\rho(\cdot)$. 
 
In terms of the order statistics $\bold W$, and $\bar W_k:=\sum_{j\in [k]}W_j$, the formulas for $\pr\bigl\{\bar W_k\le kV_1\}$
and $\hat\rho(n,k)$ become  
 \begin{equation}\label{P(barW<),hatrho=}
 \begin{aligned}
 \pr\bigl\{\bar W_k\le kV_1\}&=\ex\!\left[\bigl(1-G(\bar W_k/k)\bigr)^n\right],\\
 \hat\rho(n,k)&=(n-1)\,\ex\!\left[H_{n,k}(\bar W_k)\right],\\
 H_{n,k}(w)&\!:=-\frac{[1-G(w/k)]^n}{n-1}\\
&\quad\, +\frac{n(k+1)}{n-1}\!\int\limits_{w/k}^{\infty}\!g\bigl((k+1)v-w)\,[1-G(v)]^{n-1}\,dv.
 \end{aligned}
 \end{equation}
The top formula was the main focus of analysis in \cite{int:Peng-StQP}, and will be instrumental in our paper as well. 
As in \cite{int:Peng-StQP}, we switch to $U_j=F(W_j)$, so that $U_j$ are order statistics for the variables $F(X_j)$, where
$X_j$ are i.i.d. with the cdf $F$. We know, of course, that $F(X_j)$ are $[0,1]$-uniform. Thus $U_1,\dots,U_{n-1}$ are
the order statistics of the sequence of $(n-1)$ independent, uniformly distributed, random variables. So the formula 
of the keen interest becomes
\begin{equation}\label{key}
\pr\bigl\{\bar W_k\le kV_1\}=\ex\Biggl[\!\Biggl(1-G\Biggl(\frac{1}{k}\sum_{j=1}^kF^{-1}(U_j)\Biggr)\!\Biggr)^{\!n}\Biggr].
\end{equation}


\section{Proofs}\label{sec:proofs}
For convenience, as we go along, we will restate the claims already made in Section 1.

{Let $G=F$.} We begin with Theorem \ref{A}, a really low hanging fruit, { that nevertheless will significantly influence our estimates throughout the paper.} 
\begin{Theorem}\label{A1} Let $K_n$ be the support size for the solution of the random StQP with continuously
distributed entries. Picking $\alpha>e\sqrt{2}$ and setting $k_n:=\lceil\alpha n^{1/2}\rceil$, we have
\[
\pr\{K_n \ge k_n\} =O\bigl(e^{\gamma(\alpha)n^{1/2}}\bigr), \quad \gamma(\a):=2\a\log(e\sqrt{2}/\a) <0.
\]
\end{Theorem}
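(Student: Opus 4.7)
The plan is to combine the second-order optimality condition C.2 with Lemma \ref{bound2} via a union bound over candidate supports. If $K_n = k$ with support $\mathcal{K}$, property C.2 gives $Q_\mathcal{K} - \lambda^* E_\mathcal{K} \succcurlyeq 0$. Taking the $2\times 2$ principal minor at indices $i \neq j \in \mathcal{K}$ yields $(Q_{ii} - \lambda^*)(Q_{jj} - \lambda^*) \geq (Q_{ij} - \lambda^*)^2$; since $\lambda^* \leq \min_{i \in \mathcal{K}} Q_{ii}$ (each vertex $e_i$ is feasible), both factors on the left are nonnegative, and the elementary bound $\sqrt{ab} \leq \max(a,b)$ delivers
\[
Q_{ij} \leq \max(Q_{ii}, Q_{jj}) \quad \text{for all } i \neq j \in \mathcal{K}.
\]
Summing over the $\binom{n}{k}$ candidate supports of size $k$,
\[
\Pr\{K_n = k\} \leq \binom{n}{k} \Pr\Biggl\{\bigcap_{i\neq j \in [k]} \{Q_{ij} \leq \max(Q_{ii}, Q_{jj})\}\Biggr\} \leq \binom{n}{k} \cdot \frac{2^k}{(k+1)!},
\]
the last inequality being Lemma \ref{bound2} applied to a fixed $k \times k$ principal submatrix.

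Next I would estimate the tail sum $\sum_{k \geq k_n} \binom{n}{k} 2^k/(k+1)!$. Using $\binom{n}{k} \leq n^k/k!$ and Stirling's bound $k! \geq (k/e)^k$, each summand is at most of order $k^{-2}(2ne^2/k^2)^{k}$. For $k \geq k_n = \lceil \alpha \sqrt{n}\rceil$ with $\alpha > e\sqrt{2}$, the base satisfies $2ne^2/k^2 \leq 2e^2/\alpha^2 < 1$, and the identity $\log(2e^2/\alpha^2) = 2\log(e\sqrt{2}/\alpha) = \gamma(\alpha)/\alpha$ shows that the $k = k_n$ term is of order $\exp(\gamma(\alpha)\sqrt{n})$. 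The ratio of consecutive terms in the sum is approximately $2n/k^2 \leq 2/\alpha^2 < 1/e^2$, so the tail is geometric and absorbs into $O(e^{\gamma(\alpha)\sqrt{n}})$, giving the claimed bound.

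There is no real obstacle once the pairwise inequality $Q_{ij} \leq \max(Q_{ii}, Q_{jj})$ has been extracted from C.2; the remaining work is combinatorial bookkeeping. The one point deserving brief verification is that Lemma \ref{bound2}'s bound $2^k/(k+1)!$ applies uniformly to \emph{every} fixed $k$-subset's principal submatrix: this is immediate from the joint-distribution symmetry (diagonal entries i.i.d.\ $G$, off-diagonal entries i.i.d.\ $F$), so relabelling a $k$-subset as $[k]$ leaves the probability in the middle of Lemma \ref{bound2} unchanged. That union bound then hands the theorem back in a few lines.
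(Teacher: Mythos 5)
Your proof is correct and follows the same route as the paper: apply C.2, union-bound over $\binom{n}{k}$ candidate supports, invoke Lemma \ref{bound2}'s $2^k/(k+1)!$ estimate on each fixed $k\times k$ principal submatrix, and close with Stirling-type bounds showing the tail is geometric once $k \ge \alpha\sqrt{n}$ with $\alpha > e\sqrt{2}$. The only difference is that you re-derive the pairwise inequality $Q_{ij} \le \max(Q_{ii},Q_{jj})$ from the $2\times2$ minors of $Q_{\mathcal K}-\lambda^* E_{\mathcal K}$, a step the paper implicitly delegates to the cited proof of Lemma \ref{bound2}; this is a faithful reconstruction, not a different argument.
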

\begin{proof} From Proposition \ref{thm:Optimality} C.2 and Lemma \ref{bound2}, we have that
\begin{align*}
\pr\{K_n=k\}& \le  \pr\Bigl\{\exists\text{ a }k\times k\text{ submatrix } {\mathcal K} 
\mbox{ s.t. } Q_{\mathcal{K}}-\lambda^*E_k\succcurlyeq \bold 0\Bigr\} \\
& \le S(n,k):=\binom{n}{k}\cdot\frac{2^k}{(k+1)!}.
\end{align*}
{ Using the inequality $b!\ge (b/e)^b$ (implied by $(1+1/b)^b<e$) and its corollary $\binom{a}{b}\le (ea/b)^b$, }we obtain
\[
S(n,k)\le \left(\frac{2e^2n}{k^2}\right)^k\le \left(\frac{2e^2}{\alpha^2}\right)^k,\quad \forall\, k\ge \alpha n^{1/2}.
\]
Summing up this bound for $k\ge k_n=\lceil \alpha n^{1/2}\rceil$, we complete the proof.
\end{proof}



Turn to the integral formula \eqref{key}.  Below and elsewhere we will write $L\le_b R$ when $L=O(R)$, but $R$ is bulky
enough to make $O(R)$ unsightly.

{First, observe that, given $\delta\in (0,1)$, by the union bound we have: for $k\le k_n$, and $n$ large,
\begin{equation}\label{Uk<delta}
\begin{aligned}
\pr\{U_k\ge \delta\}&\le \binom{n-1}{n-k}(1-\delta)^{n-k}\le\binom{n}{k}(1-\delta)^{n-k}\\
&\le (1-\delta)^{n-k} n^k\le_b (1-\delta)^{0.99n} n^k
\le n^{-k},
\end{aligned}
\end{equation}
provided that $\delta=\delta_n:=2.1n^{-1}k_n\log n=O(n^{-1/2}\log n)$. So the contribution to the RHS of \eqref{key} coming from $U_k\ge \delta_n$ is at most $n^{-k}$, uniformly for $k\le k_n$.}

Second, let us show that we can focus on $\bold U$ with 
\begin{equation}\label{second}
S(\bold U):=\frac{1}{k}\sum_{j=1}^k \log\frac{1}{U_j}\lesssim \log\frac{ne}{k}.
\end{equation}
This fact will come handy in the analysis of the $F$'s support going all the way to $-\infty$.

{
\begin{Lemma}\label{A1<S<A2} Given $\a>0$, define $S=S(\a)= \log\frac{n}{\a k}$. If $\a<e^{-1}$, then
for every $\beta \in (0, 1-\a e)$ we have
\[
\pr\bigl\{S(\bold U)>S\bigr\}
\le_b\Biggl(\frac{\a e}{1-\be}\Biggr)^{\be k}.
\]
\end{Lemma}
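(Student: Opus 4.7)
The plan is to apply a Chernoff/Markov bound with a free parameter $\beta$. Writing
\[
\pr\{S(\bold U)>S\}=\pr\Bigl\{\prod_{j=1}^k U_j^{-\beta}>e^{\beta k S}\Bigr\}\le e^{-\beta k S}\,\E\Bigl[\prod_{j=1}^k U_j^{-\beta}\Bigr],
\]
the task reduces to computing (or bounding) the moment on the right, plugging in $S=\log(n/(\alpha k))$, and verifying that the powers of $n$ and $k$ cancel to leave exactly the advertised geometric base $\alpha e/(1-\beta)$.

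To compute the moment I would condition on $U_k$. Given $U_k=u$, the rescaled variables $U_1/u,\dots,U_{k-1}/u$ are distributed as the order statistics of $k-1$ independent Uniform$[0,1]$ variables, so by the symmetry of the product and independence of the unordered uniforms,
\[
\E\Bigl[\prod_{j=1}^{k-1}(U_j/U_k)^{-\beta}\,\Big|\,U_k\Bigr]=\Bigl(\E[X^{-\beta}]\Bigr)^{k-1}=(1-\beta)^{-(k-1)},\qquad X\sim U[0,1].
\]
Hence
\[
\E\Bigl[\prod_{j=1}^k U_j^{-\beta}\Bigr]=(1-\beta)^{-(k-1)}\,\E[U_k^{-k\beta}].
\]
Since $U_k\sim\text{Beta}(k,n-k)$ (as the $k$-th of $n-1$ uniform order statistics), the last expectation is the ratio of Beta functions
\[
\E[U_k^{-k\beta}]=\frac{B(k(1-\beta),n-k)}{B(k,n-k)}=\frac{\Gamma(k(1-\beta))\,\Gamma(n)}{\Gamma(k)\,\Gamma(n-k\beta)}.
\]

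The remaining step is an asymptotic simplification via Stirling. Combining the pieces and using $e^{-\beta k S}=(\alpha k/n)^{\beta k}$, the bound becomes
\[
\Bigl(\tfrac{\alpha k}{n}\Bigr)^{\beta k}(1-\beta)^{-(k-1)}\,\frac{\Gamma(k(1-\beta))}{\Gamma(k)}\,\frac{\Gamma(n)}{\Gamma(n-k\beta)}.
\]
For $k\le k_n=O(n^{1/2})$ one has $\Gamma(n)/\Gamma(n-k\beta)=n^{k\beta}(1+O(k^2/n))=n^{k\beta}\cdot O(1)$, canceling the factor $n^{-\beta k}$. Stirling on $\Gamma(k(1-\beta))/\Gamma(k)$ contributes $(1-\beta)^{k(1-\beta)-1/2}k^{-k\beta}e^{k\beta}$ up to a bounded prefactor, which cancels the factor $k^{\beta k}$ and converts the remaining $(1-\beta)$-powers, via $-(k-1)+k(1-\beta)=1-k\beta$, into $(1-\beta)^{1/2-k\beta}$. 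Collecting everything leaves
\[
\pr\{S(\bold U)>S\}\;\le_b\;(\alpha e)^{\beta k}(1-\beta)^{-k\beta}\;=\;\Bigl(\frac{\alpha e}{1-\beta}\Bigr)^{\beta k},
\]
as required; the hypothesis $\beta<1-\alpha e$ (which forces $\alpha<1/e$) is exactly what makes this base strictly less than $1$.

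The only delicate step is the Stirling bookkeeping, i.e.\ confirming that the tightly matched powers of $n$ and $k$ cancel cleanly and that the residual errors from $\Gamma(n)/\Gamma(n-k\beta)$ are absorbed into an $O(1)$ prefactor under the regime $k=O(n^{1/2})$. That is the main (and essentially only) obstacle; everything else is a short calculation.
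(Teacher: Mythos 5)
Your proof is correct and is, at its core, the same Chernoff argument as the paper's: you exponentiate $S(\bold U)$ with a free parameter (your $\beta$ corresponds to the paper's $\lambda=\beta k$), reduce to a moment computation, and arrive at exactly the same Gamma-ratio expression $e^{-\beta kS}(1-\beta)^{-(k-1)}\Gamma(k(1-\beta))\Gamma(n)/[\Gamma(k)\Gamma(n-k\beta)]$ before applying Stirling. The only genuine difference is the route to that formula: the paper evaluates the $k$-fold order-statistics integral by iterating the innermost integration $k-1$ times, whereas you condition on $U_k$, use that $(U_1/U_k,\dots,U_{k-1}/U_k)$ are order statistics of $k-1$ fresh uniforms, exploit the symmetry of the product to drop the ordering and get $(1-\beta)^{-(k-1)}$, and then invoke the Beta law of $U_k$. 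Your probabilistic decomposition is somewhat cleaner and avoids the by-hand telescoping integration, so it reads more transparently; the paper's version has the minor advantage of being entirely self-contained (no appeal to the conditional distribution of rescaled order statistics). Your Stirling bookkeeping is also correct: $\Gamma(n)/\Gamma(n-k\beta)=n^{k\beta}e^{-k^2\beta^2/(2n)+o(1)}=n^{k\beta}O(1)$ in the regime $k=O(n^{1/2})$, and the remaining powers of $k$, $e$, and $(1-\beta)$ combine to $(\alpha e/(1-\beta))^{\beta k}(1-\beta)^{1/2}$, giving the stated $\le_b$ bound.
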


\begin{proof}  Recall that the components of $\bold U$ are the $k$ first order statistics of $(n -1)$ independent, $[0,1]$-uniform random variables. In view of the sum-type formula for $S(\bold U)$ in \eqref{second}, we apply a variation of
Chernoff' method. Picking $\la>0$, and $\la<k$ in order to make the coming integral finite, we define $\bold u=(u_1<\cdots
<u_k)\in (0,1)^{n-1}$ and write
\begin{align*}
\pr\bigl\{S(\bold U)&>S\bigr\}\le e^{-\la S} (n-1)_k\!\!\int\limits_{S(\bold u)>S}\!\!\!\!\!\!(1-u_k)^{n-1-k}
\exp\!\left(\!\frac{\la}{k}\sum_{j=1}^k\log\frac{1}{u_j}\!\right)
d\bold u\\
&=\frac{e^{-\la S}(n-1)_k}{(k-1)!}\int_0^1 (1-u_k)^{n-1-k} u_k^{-\la/k}\left(\int_0^{u_k} w^{-\la/k}\,dw\right)^{k-1}\!\!\!du_k\\
&=\frac{e^{-\la S}(n-1)_k}{(1-\la/k)^{k-1}\,(k-1)!}\int_0^1 (1-u_k)^{n-1-k} u_k^{k-1-\la}\,du_k\\
&=\frac{e^{-\la S}(n-1)_k}{(1-\la/k)^{k-1}\,(k-1)!}\cdot\frac{\Gamma(n-k)\,\Gamma(k-\la)}{\Gamma(n-\la)}\\
&=\frac{e^{-\la S}}{(1-\la/k)^{k-1}}\cdot\frac{\Gamma(n)\,\Gamma(k-\la)}{\Gamma(n-\la)\,\Gamma(k)}.
\end{align*}
Set $\lambda=\beta k$;  using Stirling formula for the Gamma function in the
last expression, it is easy to see that
\[
\pr\bigl\{S(\bold U)>S\bigr\}\le_b\left(\frac{\alpha e}{1-\beta}\right)^{\beta k};
\]
the bound is exponentially small since $\alpha e<1-\beta$. 
\end{proof}
}

\subsection{Distributions with left-bounded supports}

In this section, we focus on a class of distributions satisfying the properties in Theorem \ref{B}. 

In \eqref{Uk<delta} we showed that, at the cost of $O(n^{-k})$ error term, we can neglect $\bold U$ with $U_k>
\delta_n =2.1n^{-1}k_n\log n$, for $k\le k_n=\lceil \alpha n^{1/2}\rceil$. We need to show that, for the remaining $\bold U$, 
$\phi(\bold U):=F\bigl(k^{-1}\sum_{j=1}^k F^{-1}(U_j)\bigr)$ typically dwarfs $1/n$ for large $k$,  and so makes $(1-\phi(\bold U))^n$ 
close to zero in all likelihood.
Our first step is to establish an explicit lower bound for $\phi(\bold u)$.
\begin{Lemma}\label{F(F^{-1}/k)} Assume that $F$ meets the conditions (1) and (2) in Theorem \ref{B}. 

{\bf (1)\/} There exists $\gamma=1+O\bigl(\delta_n^{1/\nu}\bigr)$
such that, uniformly for $\bold u=0< u_1\le\cdots\le u_k \le \delta_n$, we have: $\phi(\bold u)\ge \gamma\Bigl(k^{-1}\sum_{j =1}^ku_j^{1/\nu}\Bigr)^{\nu}$.

{\bf (2)\/} Further, for $\nu\ge 1$, we have $\phi(\bold u)\ge \sum_{j=1}^k \gamma_j u_j$,  with
\begin{equation}\label{gamma_j}
\begin{aligned}
&\gamma_j:=\gamma\left[\left(1-\frac{j-1}{k}\right)^{\nu}-\left(1-\frac{j}{k}\right)^{\nu}\right].\\
\end{aligned}
\end{equation}
\end{Lemma}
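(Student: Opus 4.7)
The plan is to prove Part~(1) by direct Taylor expansion of $F$ near $0$, and Part~(2) by combining Part~(1) with a single clean analytic inequality about monotone sequences.

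For Part~(1): the condition $F(x)=\rho x^{\nu}+O(x^{\nu+1})$ inverts to $F^{-1}(u)=\rho^{-1/\nu}u^{1/\nu}\bigl(1+O(u^{1/\nu})\bigr)$, uniformly for $u\in[0,\delta_n]$. Since $u_k\le \delta_n$ by hypothesis, every $u_j$ sits in this small range, so
\[
\frac{1}{k}\sum_{j=1}^{k} F^{-1}(u_j)=\rho^{-1/\nu}\bigl(1+O(\delta_n^{1/\nu})\bigr)\cdot\frac{1}{k}\sum_{j=1}^{k} u_j^{1/\nu},
\]
and the right-hand average is itself at most a constant times $\delta_n^{1/\nu}$. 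Feeding this back into $F(y)=\rho y^{\nu}(1+O(y))$ produces $\phi(\bold u)\ge \gamma\bigl(k^{-1}\sum u_j^{1/\nu}\bigr)^{\nu}$ with $\gamma=1+O(\delta_n^{1/\nu})$. The only care needed is uniformity of the error terms over $u_j\in[0,\delta_n]$, which is automatic since $\delta_n\to 0$.

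For Part~(2), set $z_j:=u_j^{1/\nu}$ (still non-decreasing). By Part~(1) it suffices to establish the purely deterministic bound
\[
\Bigl(\frac{1}{k}\sum_{j=1}^{k} z_j\Bigr)^{\!\nu} \ge \sum_{j=1}^{k}\Bigl[\bigl(1-\tfrac{j-1}{k}\bigr)^{\nu}-\bigl(1-\tfrac{j}{k}\bigr)^{\nu}\Bigr]z_j^{\nu}, \qquad 0\le z_1\le\cdots\le z_k,\ \nu\ge 1.
\]
I would recast this as an integral inequality by identifying the nondecreasing step function $s(t):=z_{\lceil kt\rceil}$ on $(0,1]$: its average is $\int_0^1 s(t)\,dt=k^{-1}\sum z_j$, and since $(1-\tfrac{j-1}{k})^{\nu}-(1-\tfrac{j}{k})^{\nu}=\int_{(j-1)/k}^{j/k}\nu(1-t)^{\nu-1}\,dt$, the right-hand side equals $\int_0^1 \nu(1-t)^{\nu-1}s(t)^{\nu}\,dt$. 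After substituting $g(y):=s(1-y)$ (so that $g$ is non-increasing), the problem reduces to: for every non-increasing $g:[0,1]\to[0,\infty)$ and every $\nu\ge 1$,
\[
\Bigl(\int_0^1 g(y)\,dy\Bigr)^{\!\nu}\ge \nu\int_0^1 y^{\nu-1}\,g(y)^{\nu}\,dy.
\]

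This last inequality is the crux. Since $g$ is non-increasing, one has the Hardy-type bound $yg(y)\le \int_0^y g(s)\,ds=:\Psi(y)$. Raising to the $(\nu-1)$-th power (valid because $\nu-1\ge 0$) and multiplying by $g(y)=\Psi'(y)$ gives $y^{\nu-1}g(y)^{\nu}\le \Psi'(y)\Psi(y)^{\nu-1}$; integrating over $[0,1]$ turns the right-hand side into $\Psi(1)^{\nu}/\nu=(\int_0^1 g)^{\nu}/\nu$, which is exactly the claim after multiplying by $\nu$. The bound is sharp: indicators $g=c\,\mathbf{1}_{[0,T]}$ give equality, which serves as a sanity check. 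The main obstacle I anticipate is spotting the integral reformulation; once the discrete sum is rewritten as $\int_0^1 \nu(1-t)^{\nu-1}s(t)^{\nu}\,dt$, the monotone-function bound $yg(y)\le \Psi(y)$ is standard and the rest is a one-line integration.
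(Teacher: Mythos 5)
Your Part (1) is essentially the paper's argument: both proofs extract from $F(x)=\rho x^{\nu}+O(x^{\nu+1})$ a two-sided control of $F(x)/x^{\nu}$ on $[0,F^{-1}(\delta_n)]$ (the paper via explicit extremizers $\sigma,\eta$ of that ratio, you via direct inversion of the expansion), and both rely on the observation that the error is uniformly $O(\delta_n^{1/\nu})$ because every $u_j\le\delta_n$. That part checks out.

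Part (2) is where you genuinely diverge, and your route is correct. The paper reduces to the inequality $\bigl(\sum_j v_j\bigr)^{\nu}\ge\sum_j v_j^{\nu}\bigl[(k-j+1)^{\nu}-(k-j)^{\nu}\bigr]$ for $0\le v_1\le\cdots\le v_k$, normalizes $\sum v_j=1$, notes the right side is a convex function of $\bold v$, so the max over the polytope is attained at an extreme point $(0,\ldots,0,v,\ldots,v)$, and there the telescoping sum evaluates to $1$; the multinomial computation for integer $\nu$ is given only as motivation. You instead pass to the step function $s(t)=z_{\lceil kt\rceil}$, rewrite the right side as $\int_0^1\nu(1-t)^{\nu-1}s(t)^{\nu}\,dt$, substitute $g(y)=s(1-y)$, and prove the continuous inequality $\nu\int_0^1 y^{\nu-1}g(y)^{\nu}\,dy\le\bigl(\int_0^1 g\bigr)^{\nu}$ for nonincreasing $g$ via the pointwise bound $yg(y)\le\Psi(y):=\int_0^y g$ followed by $y^{\nu-1}g^{\nu}\le\Psi'\Psi^{\nu-1}$ and one exact integration. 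Both arguments are tight and use $\nu\ge1$ in the same essential way (the paper for convexity of $t\mapsto t^{\nu}$, you for raising $yg(y)\le\Psi(y)$ to the power $\nu-1\ge0$). Your version buys a statement that makes no reference to $k$ at all, isolates a clean standalone inequality for monotone functions with a transparent equality case ($g=c\,\mathbf 1_{[0,T]}$), and avoids the vertex enumeration; the paper's version is more elementary in the sense of staying finite-dimensional and pairs naturally with the multinomial heuristic that suggested the form of $\gamma_j$ in the first place.
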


\begin{proof} {\bf (1)\/} {We will use the notation $g=\Theta(f)$, if $g,f>0$ and $g\ge c f$ for an absolute constant $c>0$ in
a given range of the arguments of $f$ and $g$. }

Since $F(x)=\Theta(x^{\nu})$, $(x\downarrow 0)$, we see that $F^{-1}(u)\in [0,F^{-1}(\delta_n)]$ for $u\in [0,\delta_n]$, and $F^{-1}(\delta_n)=O\bigl(\delta_n^{1/\nu}\bigr)$.   Define
\[
\sigma=\min_{x\in [0,F^{-1}(\delta_n)]} \frac{F(x)}{x^{\nu}}, \quad \eta=\frac{1}{\max_{x\in [0,F^{-1}(\delta_n)]} 
\frac{F(x)}{x^{\nu}}},\quad \gamma=\frac{\sigma}{\eta}.
\]
Since $F(x) =\rho x^{\nu} +O(x^{\nu+1})$, we have $\sigma, \eta=\rho+O\bigl(\delta_n^{1/\nu}\bigr)$,
so $\gamma=1+O\bigl(\delta_n^{1/\nu}\bigr)$, and furthermore
\begin{equation}\label{F,F^{-1}>}
F(x)\ge \sigma x^{\nu},\,\,\forall\, x\in \bigl[0,F^{-1}(\delta_n)\bigr];\quad
F^{-1}(u)\ge \left(\frac{u}{\eta}\right)^{1/\nu},\,\,\forall\,u\in [0,\delta_n].
\end{equation}
 Applying \eqref{F,F^{-1}>},  we lower-bound
\begin{align*}
\phi(\bold u) &\ge \sigma\Bigl(k^{-1}\sum_{j=1}^kF^{-1}(u_j)\Bigr)^{\nu}
\ge \gamma\Bigl(k^{-1}\sum_{j =1}^ku_j^{1/\nu}\Bigr)^{\nu}.
\end{align*}
{\bf (2)\/}
Suppose $\nu\ge 1$. To explain $\gamma_j$  in \eqref{gamma_j}, let $\nu$ be an integer. Using notation $\binom{\nu}{\nu_1,\dots,\nu_k}$
for the multinomial coefficient $\nu!/[\nu_1!\cdots\nu_k!]$, ($\sum_j \nu_j=\nu$), we have
\begin{multline*}
\gamma\left(\frac{1}{k}\sum_{j=1}^k u_j^{1/\nu}\right)^{\nu}=\frac{\gamma}{k^{\nu}}\sum_{\nu_1,\dots,\nu_k}
\binom{\nu}{\nu_1,\dots,\nu_k}\prod_{j=1}^k u_j^{\nu_j/\nu}\\
\text{partitioning the sum according to the first }j\\
\text{ such that }\nu_j>0\text{ and using }u_j=\min_{\ell\ge j}u_{\ell}\le 1\\
\ge\frac{\gamma}{k^{\nu}}\sum_{j=1}^k u_j\sum_{\nu_j+\cdots+\nu_k=\nu,\,\nu_j>0}\binom{\nu}{\nu_j,\dots,\nu_k}\\
= \frac{\gamma}{k^{\nu}}\sum_{j=1}^k u_j \bigl((k-j+1)^{\nu}-(k-j)^{\nu}\bigr)
=\sum_{j=1}^k \gamma_j u_j.
\end{multline*}
Emboldened by this derivation, let us show that this inequality holds for all $\nu\ge 1$. Clearly our task is to prove that, for 
\begin{equation}\label{1constraint}
0\le v_1\le v_2\le\cdots\le v_k,
\end{equation}
we have
\[
\left(\sum_{j=1}^k v_j\right)^{\nu}\ge \sum_{j=1}^k v_j^{\nu}\bigl[(k-j+1)^{\nu}-(k-j)^{\nu}\bigr].
\]
Without loss of generality, we may assume that $\sum_{j=1}^k v_j=1$. We need to show that, subject to this constraint,
the maximim value of the RHS function, call it $\psi(\bold v)$, is $1$. Since $\nu\ge 1$, $\psi(\bold v)$ is a convex function of $\bold v\ge \bold 0$. So, for $\bold v$ meeting \eqref{1constraint} {\it and\/}
$\sum_{j=1}^k v_j=1$, $\psi(\bold v)$ attains its maximum at an extreme point of the resulting polyhedron.
Every such point $\bold v$ is of the form $\bold v=(0,\dots 0, v,\dots,v)$. So if the last zero is at position $j_0$, then
$(k-j_0)v=1$, i.e. $v=(k-j_0)^{-1}$, and therefore $\psi(\bold v)$ is
\begin{multline*}
\sum_{j=1}^kv_j^{\nu} \bigl[(k-j+1)^{\nu}-(k-j)^{\nu}\bigr]\\
=(k-j_0)^{-\nu}\sum_{j=j_0+1}^k\bigl[(k-j+1)^{\nu}-(k-j)^{\nu}\bigr]\\
=(k-j_0)^{-\nu}\cdot (k-j_0)^{\nu}=1.
\end{multline*}
\end{proof}
Armed with this lemma, we derive the upper bound for the truncated expectation 
\[
E_{n,k}:= \ex\Bigl[1_{\{U_k\le\delta_n\}}\bigl(1-\phi(\bold U)\bigr)^n\Bigr], \quad \phi(\bold U)=F\Bigl(k^{-1}\sum_{j =1}^kF^{-1}(U_j)\Bigr).
\]
Let $\nu\ge 1$.  Using notations $\gamma_{j:k}=\sum_{\ell= j}^{k}\gamma_{\ell}$, 
 $d\bold u_t=\prod_{j=1}^t du_j$, $u_0=0$, we write
\begin{align*}
&\frac{E_{n,k}}{(n-1)_k} = \!\!\int\limits_0^{\delta_n}\cdots\!\!\int\limits_{u_{k-1}}^{\delta_n}\!\!(1-u_k)^{n-1-k}
\bigl(1-\phi(\bold u)\bigr)^n d\bold u_k\\
&\le \!\!\int\limits_0^{\delta_n}\cdots\!\!\int\limits_{u_{k-1}}^{\delta_n}\!\!(1-u_k)^{n-1-k}
\Bigl(1-\sum_{j=1}^k\gamma_ju_j\Bigr)^n d\bold u_k\\
&\qquad\qquad\qquad\qquad \text{using concavity of }\log(\cdot)\\
&\le\!\!\int\limits_0^{\delta_n}\cdots\!\!\int\limits_{u_{k-1}}^{\delta_n}\Biggl[1-u_k+\frac{n}{2n-1-k}\Biggl(u_k(1-\gamma_k)-\sum_{j=1}^{k-1}\gamma_ju_j\Biggr)\Biggr]^{2n-1-k} d\bold u_k\\
&\quad\,\text{integrating by parts over }u_k\text{ and dropping the negative term at } u_k=\delta_n\\
&\le (2n-k)^{-1}\left(1-\frac{n}{2n-1-k}(1-\gamma_k)\right)^{-1}\\
& \times\!\!\int\limits_0^{\delta_n}\cdots\!\!\int\limits_{u_{k-2}}^{\delta_n}
\Biggl[1-u_{k-1}+\frac{n}{2n-k}\Biggl(\!u_{k-1}(1-\gamma_{k-1:k})-\sum_{j=1}^{k-2}\gamma_ju_j\Biggr)\Biggr]^{2n-k+1} \!\!d\bold u_{k-1}.
\end{align*}
Repeating the integration step $(k-1)$ times, we arrive at the bound
\begin{align*}
E_{n,k}&\le 
\prod_{j=1}^k \frac{n-1-k+j}{2n-1-k+j}\prod_{j=1}^k\!\left(1-\frac{n}{2n-1-k}(1-\gamma_{k-j+1:k}) \!\right)^{-1}\\
&=\prod_{j=1}^k \frac{n-1-k+j}{2n-1-k+j}\prod_{j=1}^k\!\left(\!1-\frac{n}{2n-1-k}\left(\!1-\gamma\left(\frac{j}{k}\right)^{\nu}\right)\!\right)^{-1}.\\
\end{align*}

\noindent Taking logarithms and using  $k\le k_n$, $\gamma=1+O(\delta_n^{1/\nu})$, we easily obtain
\begin{equation}\label{easily}
\begin{aligned}
&\qquad\qquad\qquad \frac{\log E_{n,k}}{k}\le o(1) -c(\nu),\\
c(\nu)&=\log 2+\int_0^1\log\left(\frac{1}{2}+\frac{x^{\nu}}{2}\right)\,dx=\int_0^1\log\bigl(1+x^{\nu}\bigr)\,dx.
\end{aligned}
\end{equation}
Clearly, $c(\nu)$ is positive and decreasing as $\nu$ increases. Note that
\begin{align*}
c(1)&=2\log 2-1\approx 0.386, \\
c(\nu)&=\sum_{j\ge 1}\frac{(-1)^{j-1}}{j(\nu j+1)}\rightarrow 0 \mbox{ as } \nu\rightarrow \infty.\\
\end{align*}
Let $\nu<1$. Since $\left(\frac{1}{k}\sum_{j=1}^k u_j^{1/\nu}\right)^{\nu}$ is decreasing in $\nu$, \eqref{easily} 
holds with $c(\nu):=c(1)$. Thus, for all $\nu>0$, the truncated expectation $E_{n,k}$ is
decreasing exponentially with $k\le k_n$. Combining this claim with \eqref{Uk<delta}, we have proved
\begin{Lemma}\label{thm:pu_kv_1}
Under the conditions  (1-2) in Theorem \ref{B}, for $k\le k_n$, we have
\[
\pr\{\bar W_k\le kV_{1}\}\le \exp\bigl (o(k) -c(\max(\nu,1)) k).
\]
\end{Lemma}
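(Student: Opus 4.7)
The plan is to combine the integral representation \eqref{key} with the two preparatory estimates built up above: the $U_k$-tail bound \eqref{Uk<delta} and the pointwise lower bounds on $\phi(\bold u)=F\bigl(k^{-1}\sum_j F^{-1}(u_j)\bigr)$ supplied by Lemma \ref{F(F^{-1}/k)}. From \eqref{key} we have $\pr\{\bar W_k\le kV_1\}=\ex\bigl[(1-\phi(\bold U))^n\bigr]$, and I would split this expectation into the two regions $\{U_k>\delta_n\}$ and $\{U_k\le\delta_n\}$. On the tail region the crude bound $(1-\phi)^n\le 1$ together with \eqref{Uk<delta} contributes at most $n^{-k}$, which is exponentially smaller than the target $\exp(-c(\max(\nu,1))k)$ and so is absorbed into the $o(k)$ correction.

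The main work is bounding the bulk contribution $E_{n,k}:=\ex\bigl[1_{\{U_k\le\delta_n\}}(1-\phi(\bold U))^n\bigr]$. For $\nu\ge 1$ I would invoke the linear lower bound $\phi(\bold u)\ge\sum_j\gamma_j u_j$ from Lemma \ref{F(F^{-1}/k)}(2), plug in the joint density $(n-1)_k(1-u_k)^{n-1-k}$ of $(U_1,\dots,U_k)$, and then combine the two factors $(1-u_k)^{n-1-k}$ and $(1-\sum_j\gamma_j u_j)^n$ into a single $(2n-1-k)$-power of a convex combination by concavity of $\log$. The resulting integrand is engineered so that sequential integration by parts, starting with $u_k$ and working inward, produces a clean telescoping product: each step contributes a factor $(n-1-k+j)/(2n-1-k+j)$ and a resolvent factor $\bigl(1-\tfrac{n}{2n-1-k}(1-\gamma(j/k)^\nu)\bigr)^{-1}$. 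Taking logarithms and approximating the sums by Riemann integrals (permissible since $k\le k_n=O(n^{1/2})$) yields $\log E_{n,k}/k\le o(1)-c(\nu)$, with the $\log 2$ terms from the two products cancelling exactly to leave $c(\nu)=\int_0^1\log(1+x^\nu)\,dx$.

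For $\nu<1$ the linear lower bound is no longer available, and this is the genuine subtlety of the argument. I would instead use Lemma \ref{F(F^{-1}/k)}(1), $\phi(\bold u)\ge\gamma\bigl(k^{-1}\sum_j u_j^{1/\nu}\bigr)^\nu$, and observe that for fixed $u_j\in[0,1]$ the power-mean $\nu\mapsto\bigl(k^{-1}\sum_j u_j^{1/\nu}\bigr)^\nu$ is monotone decreasing in $\nu$ (either by Jensen applied to the concave map $x\mapsto x^{\nu/\mu}$, $\mu<\nu$, or by inspecting extreme configurations such as $u_1=0$, $u_2=1$, where one gets $(1/2)^\nu$). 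Hence the $\nu<1$ lower bound dominates the $\nu=1$ one, so the preceding $\nu\ge 1$ calculation applies with the universal constant $c(1)$, giving the bound with $c(\max(\nu,1))$. Adding the two regional contributions completes the proof. The one step I would watch most carefully is the Riemann-sum approximation, but since $k\le k_n\to\infty$ with $n$ and the integrand $\log(1+x^\nu)$ is continuous on $[0,1]$, a uniform $o(k)$ error is automatic.
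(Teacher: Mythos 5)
Your proposal matches the paper's proof in all essentials: the split of \eqref{key} into the $\{U_k>\delta_n\}$ tail (bounded by $n^{-k}$ via \eqref{Uk<delta}) and the bulk $E_{n,k}$, the use of the linear lower bound $\phi(\bold u)\ge\sum_j\gamma_j u_j$ from Lemma \ref{F(F^{-1}/k)}(2) for $\nu\ge 1$, the merger of the two powers via concavity of $\log$, sequential integration by parts yielding the telescoping product, the Riemann-sum passage to $c(\nu)=\int_0^1\log(1+x^\nu)\,dx$, and the power-mean monotonicity argument to reduce $\nu<1$ to the $\nu=1$ bound. This is the paper's argument reproduced faithfully and correctly.
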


It remains to upper-bound $\hat \rho(n,k)$. According to \eqref{P(barW<),hatrho=},
\begin{equation}\label{hatrho=Ex[int]}
\hat\rho(n,k)\le n(k+1)\ex\Biggl[\int_{\bar W_k/k}^{\infty}f\bigl((k+1)v-\bar W_k\bigr)\,\bigl[1-F(v)\bigr]^{n-1}\,dv\Biggr].
\end{equation}
Let us bound the integral. We now assume that the density $f$ satisfies the condition (3): 
\[
\beta=\sup\Bigl\{f(v')/f(v):\,v' \in [v, 2v],\,f(v)>0\Bigr\}<\infty;
\]
following Feller \cite{Fel1}, one can say that the density $f$ is {\it dominatedly varying\/} on the support of the distribution $F$. We will need
the following result, cf. \cite{Fel1}, \cite{Fel2}.
\begin{Lemma}\label{beta(k)<} Introduce
\begin{equation}\label{beta(k)=}
\beta(j)=\sup\left\{\frac{f(x')}{f(x)}: x,\,x'\in (0,b),\,x'\in [x,jx],\,f(x)>0\right\},\quad j>1.
\end{equation}
Under the condition (3), we have $\beta(j)\le \beta j^{\alpha}$, with $\alpha:=\log_2\beta$.
\end{Lemma}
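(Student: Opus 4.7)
The natural approach is a telescoping chain argument: condition (3) controls $f(x')/f(x)$ only when $x'/x\in(1,2)$, so to reach ratios $x'/x$ up to $j$ we should decompose the interval $[x,x']$ into geometrically spaced pieces, each of ratio at most $2$, and multiply the bounds.

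Fix $x\in(0,B)$ with $f(x)>0$ and $x'\in[x,jx]$. Let $m=\lceil\log_2 j\rceil$, and set the geometric interpolants
\[
x_i := x\cdot(x'/x)^{i/m},\qquad i=0,1,\dots,m,
\]
so $x_0=x$, $x_m=x'$, and $x_{i+1}/x_i=(x'/x)^{1/m}\le j^{1/m}\le 2$. Condition (3), together with continuity of $f$ (which extends the supremum from the open interval $x'\in(x,2x)$ to the closed interval $[x,2x]$, the endpoint $x'=x$ contributing the trivial ratio $1$), then gives $f(x_{i+1})/f(x_i)\le\beta$ for each $i$, provided we verify that $f(x_i)>0$ all along the chain. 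But $f(x_0)=f(x)>0$ and each successive ratio $f(x_{i+1})/f(x_i)\le\beta<\infty$ forbids $f(x_i)$ from collapsing to $0$, so the chain is well defined. Telescoping,
\[
\frac{f(x')}{f(x)}=\prod_{i=0}^{m-1}\frac{f(x_{i+1})}{f(x_i)}\le \beta^m.
\]

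Finally, since $m=\lceil\log_2 j\rceil\le 1+\log_2 j$, we obtain
\[
\beta^m \le \beta^{1+\log_2 j} = \beta\cdot 2^{(\log_2\beta)(\log_2 j)} = \beta\,j^{\alpha},\qquad \alpha=\log_2\beta,
\]
and taking the supremum over all admissible pairs $(x,x')$ yields $\beta(j)\le\beta j^{\alpha}$, as claimed.

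The only real subtlety is justifying positivity of $f$ along the chain and the open-versus-closed issue in condition (3); everything else is a one-line chaining calculation, so I do not expect a genuine obstacle here.
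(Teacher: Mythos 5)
Your proof is correct and is essentially the same chaining argument as the paper's: the paper abstracts the step through a submultiplicativity inequality $\beta(k_1k_2)\le\beta(k_1)\beta(k_2)$ and then interpolates dyadically, while you build the geometric chain $x_i=x(x'/x)^{i/m}$ explicitly and telescope, but the underlying idea and the resulting bound $\beta j^{\log_2\beta}$ are identical. (Your worry about positivity along the chain is harmless but unnecessary here, since condition~(1) of Theorem~\ref{B} already guarantees $f>0$ on the whole support.)
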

\begin{proof} (For completeness.) First of all, 
\[
\beta(k_1k_2)\le \beta(k_1)\beta(k_2),\quad k_1,\,k_2\ge 2;
\]
thus $\beta(\cdot)$ is a sub-multiplicative function.  Consequently, $\beta(2^m)\le \beta(2)^m=\beta^m$. 
Second, given $k>2$, let $m=m(k)$ be such that $2^{m-1}< k\le 2^{m}$. Since $\beta(\cdot)$ is increasing,
we have 
\begin{align*}
\beta(k)&\le \beta(2^m)\le \beta^m\le \beta^{\frac{\log(2k)}{\log 2}}
=(2k)^{\log_2\beta}=\beta k^{\log_2\beta}.
\end{align*}
\end{proof}
The argument of the density $f$ in \eqref{hatrho=Ex[int]} is sandwiched between $v$ and $kv$. So, by Lemma \ref{beta(k)=}, we obtain
\begin{align*}
\hat\rho(n,k)&\le_b n k^{\log_2\be+1}\ex\Biggl[\int_{\bar W_k/k}^{\infty}f(v)\,\bigl[1-F(v)\bigr]^{n-1}\,dv\Biggr]\\
&=k^{\log_2\be+1}\ex\Bigl[\bigl[1-F(\bar W_k/k)\bigr]^n\Bigr]\\
&=k^{\log_2\be+1}\exp\bigl(-c(\max(\nu,1))k+o(k)\bigr)\\
&=\exp\bigl(-c(\max(\nu,1))k+o(k)\bigr),
\end{align*}
as $\log k=o(k)$ for $k\to\infty$.

Therefore we completed the proof of
\begin{theorem}
	Under the properties (1-3) in Theorem \ref{B}, for $k\le k_n$, 
	\[
	\pr\{K_n=k+1\}\le \exp\bigl (- c(\max(\nu,1)) k+o(k)\bigr).
	\]
\end{theorem}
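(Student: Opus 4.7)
The plan is to combine Lemma \ref{bound1} with the integral representations in \eqref{P(barW<),hatrho=} and the hypotheses (1)--(3) of Theorem \ref{B}. Since $\pr\{K_n=k+1\}\le \pr\{\bar W_k\le kV_1\}+\hat\rho(n,k)$, it suffices to show each summand is bounded by $\exp(-c(\max(\nu,1))k+o(k))$. This decouples the argument into a ``main term'' part based on formula \eqref{key} and a ``correction'' part based on \eqref{hatrho=Ex[int]}.

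First I would tackle $\pr\{\bar W_k\le kV_1\}$ via formula \eqref{key}. The union-bound estimate \eqref{Uk<delta} shows that restricting to $\{U_k\le\delta_n\}$ with $\delta_n=2.1 n^{-1} k_n \log n$ costs only $O(n^{-k})$. On this event, the local hypothesis (2) on $F$ near $0$ gives, via Lemma \ref{F(F^{-1}/k)}, the uniform lower bound $\phi(\bold u):=F(k^{-1}\sum_j F^{-1}(u_j))\ge \gamma(k^{-1}\sum_j u_j^{1/\nu})^\nu$ with $\gamma=1+o(1)$, and for $\nu\ge 1$ the sharper linear bound $\phi(\bold u)\ge \sum_j \gamma_j u_j$ with $\gamma_j$ from \eqref{gamma_j}. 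I would then evaluate the truncated expectation $E_{n,k}=\ex[1_{\{U_k\le\delta_n\}}(1-\phi(\bold U))^n]$ by iterated integration by parts in $u_k,u_{k-1},\dots$, using concavity of $\log$ to absorb the $(1-\phi(\bold u))^n$ factor into a suitable power of $1-u_j+(\text{linear in earlier }u_i)$. Each integration step produces a factor of the form $(1-n(2n-1-k)^{-1}(1-\gamma(j/k)^\nu))^{-1}$ and lets one discard a non-positive boundary term at the upper endpoint $\delta_n$. Taking logarithms and recognizing the resulting sum as a Riemann sum for $\int_0^1\log(1+x^\nu)\,dx$ yields $\log E_{n,k}/k\le -c(\nu)+o(1)$; the case $\nu<1$ follows at once by monotonicity of $(k^{-1}\sum_j u_j^{1/\nu})^\nu$ in $\nu$, giving the bound with $c(1)$. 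This is Lemma \ref{thm:pu_kv_1}.

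Second, I would control $\hat\rho(n,k)$ via \eqref{hatrho=Ex[int]}. The key observation is that on the domain of integration, the argument $(k+1)v-\bar W_k$ of $f$ lies in $[v,kv]$, so the dominatedly varying hypothesis (3), amplified by Lemma \ref{beta(k)<}, gives $f((k+1)v-\bar W_k)\le \beta k^{\log_2\beta} f(v)$. Pulling out this polynomial prefactor reduces the inner integral to $n^{-1}[1-F(\bar W_k/k)]^n$, whose expectation is exactly the quantity bounded in the first step. The resulting factor $n k^{\log_2\beta+1}$ is absorbed into the $o(k)$ error term since $\log k=o(k)$.

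The main obstacle is the iterated integration step in the first part: one must verify that each bracketed quantity $1-u_j+n(2n-1-k)^{-1}(\text{linear})$ remains in $(0,1]$ throughout the domain (so the concavity-of-$\log$ step is legitimate), and that the boundary contribution at $u_j=\delta_n$ indeed has a sign that allows it to be discarded. The book-keeping of the cumulative partial sums $\gamma_{j:k}=1-\gamma((k-j+1)/k)^\nu$ and the delicate behavior of the factors $(1-n(2n-1-k)^{-1}(1-\gamma(j/k)^\nu))^{-1}$ for $j$ close to $k$, where $1-\gamma(j/k)^\nu$ is small, is the most intricate portion, but once the product is assembled the identification of its log with $-kc(\nu)+o(k)$ is a routine Riemann-sum computation.
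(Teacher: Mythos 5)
Your proposal matches the paper's own proof essentially step for step: the same decomposition $\pr\{K_n=k+1\}\le\pr\{\bar W_k\le kV_1\}+\hat\rho(n,k)$, the same truncation to $\{U_k\le\delta_n\}$ via \eqref{Uk<delta}, the same use of Lemma \ref{F(F^{-1}/k)} and iterated integration by parts to establish Lemma \ref{thm:pu_kv_1}, and the same application of Lemma \ref{beta(k)<} to reduce $\hat\rho(n,k)$ to the main term up to a $k^{\log_2\beta+1}$ prefactor absorbed into $o(k)$. The approach and the key lemmas are identical to the paper's.
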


{{\bf Note.\/}  The conditions (1), (2) relate to a {\bf single\/} point $a$, i.e. they are so mild that there are scores of the classic densities meeting them. The condition (3) is different, as it concerns the
ratio of the density values at the pairs of comparably large/small $x$ and $x'$. For the density $f$ of a cdf F meeting the conditions (1) and (2), the condition (3) is met, for instance, if (i) $f(x)>0$ for $x\in (0,b)$  and (ii) there is $x_1\in (0,b)$ such that $f(x)$ is decreasing on $[x_1,b)$.
}

\subsection{Distributions whose supports are not left bounded}

In this section, we turn to the case when the support of the distribution extends all the way to $-\infty$. Two examples come to mind. One is the normal distribution, with density
\[
f(x)=\frac{1}{\sqrt{2\pi}} e^{-x^2/2}.
\]
It is known, and can be proved via a single integration by parts, that
\[
F(x)=\frac{1+O(|x|^{-1})}{|x|\sqrt{2\pi}}\,e^{-x^2/2},\quad x\to -\infty.
\]
Another example is a positive exponential, with $F(x)=e^x$, for $x<0$, and $F(x)\equiv 1$ for $x\ge 0$. They both are special cases of the distribution $F(x)$ such that, for some $a\in (-\infty,\infty)$, $b>0$, $c>0$, and $\kappa>0$,
\begin{equation}\label{F(-infty)}
F(x)= (c+O(|x|^{-\kappa})) |x|^a \exp(-|x|^b),\quad x\to-\infty,
\end{equation}
which will be the focus of our analysis here. 

We distinguish between two cases depending on the value of $b$. 

\subsubsection{{\bf Case\/} $\bold b\ge \bold 1$.} Recall the notation 
\[
\phi(\bold u)=F\left(\frac{1}{k}\sum_{j=1}^k F^{-1}(u_j)\right),\quad S(\bold u)=\frac{1}{k}\sum_{j=1}^k\log \frac{1}{u_j}.
\]
 We will write $L\gtrsim R$ if $L\ge (1+o(1))R$.

\begin{Lemma}\label{normal}  For $u_k\le \delta_n$, it holds that
$
\phi(\bold u)\gtrsim S(\bold u)^{\min\{0,a/b\}} e^{-S(\bold u)}.
$
\end{Lemma}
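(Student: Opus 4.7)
The plan is to pass to $y_j:=-F^{-1}(u_j)>0$ and $t_j:=\log(1/u_j)$. Taking $-\log$ of $u_j=F(-y_j)$ and inserting \eqref{F(-infty)} gives
\[
y_j^b=t_j+a\log y_j+\log c+O(y_j^{-\kappa}),
\]
while applying $F$ at $-\bar y$, $\bar y:=\frac{1}{k}\sum_j y_j$, yields
\[
\log\phi(\bold u)=\log c+a\log\bar y-\bar y^b+O(\bar y^{-\kappa}).
\]
The constraint $u_k\le\delta_n$ forces $t_j\ge t_k\ge\log(1/\delta_n)=\Omega(\log n)$, so $y_j,\bar y\to\infty$ uniformly and each $O(\cdot^{-\kappa})$ contribution is a uniform $o(1)$.

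The crucial estimate uses convexity of $x\mapsto x^b$ (valid since $b\ge 1$): by Jensen,
\[
\bar y^b\le\frac{1}{k}\sum_{j=1}^k y_j^b=S(\bold u)+a\,\overline{\log y}+\log c+o(1),
\]
with $\overline{\log y}:=\frac{1}{k}\sum_j\log y_j$. Substituting into $\log\phi(\bold u)$, the two $\log c$'s cancel, giving
\[
\log\phi(\bold u)\ge -S(\bold u)+a\Delta-o(1),\qquad \Delta:=\log\bar y-\overline{\log y}\ge 0,
\]
the nonnegativity of $\Delta$ being Jensen for the concave $\log$. When $a\ge 0$, the term $a\Delta$ is already nonnegative, so $\phi(\bold u)\gtrsim e^{-S}$, which matches $\min(0,a/b)=0$.

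When $a<0$, I additionally need $\Delta\le b^{-1}\log S+o(1)$. In that case $a\,\overline{\log y}\le 0$ in the displayed bound on $\bar y^b$, hence $\bar y^b\le S+O(1)$, so $\log\bar y\le b^{-1}\log S+o(1)$. Combined with $\overline{\log y}\ge 0$ (valid for $n$ large since $y_j\ge y_k$ with $y_k^b\sim t_k\to\infty$), this yields $\Delta\le\log\bar y\le b^{-1}\log S+o(1)$. Multiplying through by the negative $a$ flips the inequality to $a\Delta\ge(a/b)\log S+o(1)$, whence $\log\phi(\bold u)\ge -S+(a/b)\log S+o(1)$, i.e.\ $\phi(\bold u)\gtrsim S^{a/b}e^{-S}$.

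The main obstacle is careful bookkeeping: verifying that every additive error ($\log c$, $O(y_j^{-\kappa})$, and the Jensen gaps) is truly $o(1)$ \emph{uniformly} on $\{u_k\le\delta_n\}$, which rests on the quantitative lower bound $t_k=\Omega(\log n)$. Beyond that, the argument only braids together two Jensen inequalities --- convex $x^b$ and concave $\log x$ --- by way of the defining asymptotic \eqref{F(-infty)}.
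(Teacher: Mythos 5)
Your argument is correct and reaches the lemma's conclusion by braiding the same two Jensen inequalities as the paper, but packaged differently and more cleanly. The paper first solves the implicit relation $|x_j|^b=\log(c/u_j)+a\log|x_j|+O(|x_j|^{-\kappa})$ iteratively (replacing $\log|x_j|$ by $\tfrac1b\log\log(1/u_j)$ to second order, yielding the explicit expansion \eqref{|x|=}), then applies concavity of $y\mapsto y^{1/b}$ to $|X|=\tfrac1k\sum|x_j|$, and finally, in the case $a>0$, applies concavity of $\log$ to bound $\tfrac1k\sum\log\log(1/u_j)\le\log S(\bold u)$; the case $a\le0$ is handled separately by simply dropping a nonpositive term. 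You sidestep the iterative substitution entirely: keeping $y_j^b=t_j+a\log y_j+\log c+O(y_j^{-\kappa})$ implicit, you bound $\bar y^{\,b}\le\tfrac1k\sum y_j^b$ by convexity of $x\mapsto x^b$ (the inverse of the paper's concavity step), observe that the $\log c$ terms cancel when you plug into $\log\phi(\bold u)=\log c+a\log\bar y-\bar y^{\,b}+O(\bar y^{-\kappa})$, and absorb the whole $a$-dependence into the single Jensen gap $\Delta=\log\bar y-\overline{\log y}\ge 0$. The sign of $a\Delta$ then disposes of $a\ge 0$ for free, and the one extra line $\Delta\le\log\bar y\le b^{-1}\log S+o(1)$ (valid because $\overline{\log y}\ge0$ once $y_k^b\sim t_k\to\infty$ uniformly on $\{u_k\le\delta_n\}$) handles $a<0$. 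Your route is shorter and unifies the two cases; the one thing the paper's explicit expansion \eqref{|x|=} buys is that it is reused verbatim as the starting point \eqref{X>-} of the $b<1$ analysis, so the implicit approach would need its own derivation there.
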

\begin{proof} Since $x=F^{-1}(u)$ iff $u=F(x)$, we have: for $u\downarrow 0$, 
\begin{equation}\label{|x|=}
\begin{aligned}
|x|&=\left(\log\frac{c}{u}+ a\log |x|+O(|x|^{-\kappa})\right)^{1/b}\\
&=\left[\log\frac{c}{u}+\frac{a}{b}\log\left(\log\frac{c}{u}+a\log |x|+O(|x|^{-\kappa}\right)\right]^{1/b}\\
&=\left[\log\frac{c}{u}+\frac{a}{b}\log\log\left(\frac{1}{u}\right)+O\left(\frac{\log\log(1/u)}{\log(1/u)}\right)\right]^{1/b}.
\end{aligned}
\end{equation}
As $y^{1/b}$ is concave for $b\ge 1$, we obtain then
\begin{align*}
X&:=\frac{1}{k}\sum_{j=1}^k F^{-1}(u_j)\\
&=-\frac{1}{k}\sum_{j=1}^k\left[\log\frac{c}{u_j}+\frac{a}{b}\log\log\left(\frac{1}{u_j}\right)+O\left(\frac{\log\log(1/u_j)}{\log(1/u_j)}\right)\right]^{1/b}\\
&\ge -\left[\frac{1}{k}\sum_{j=1}^k\left(\log\frac{c}{u_j}+\frac{a}{b}\log\log\left(\frac{1}{u_j}\right)+O\left(\frac{\log\log(1/u_j)}{\log(1/u_j)}\right)\right)\right]^{1/b}\\
&=-\left[\log c+S(\bold u) +o(1)+\frac{a}{bk}\sum_{j=1}^k \log\log\left(\frac{1}{u_j}\right)\right]^{1/b}.
\end{align*}
{\bf (i)\/} If $a\le 0$, then
$
X\ge-\bigl[\log c +S(\bold u)+o(1)\bigr]^{1/b}.
$
Since $X\to-\infty$, we evaluate $F(X)$ using \eqref{F(-infty)}:
\begin{align*}
F(X)&=(c+O(|X|^{-\kappa})) |X|^a \exp(-|X|^b)\\
&\ge (1+o(1)) S(\bold u)^{a/b} e^{-S(\bold u)}.
\end{align*}
{\bf (ii)\/} If $a>0$, then, using concavity of $\log(\cdot)$, we obtain
\begin{align*}
\bold X&\ge -\Biggl[\log c+S(\bold u) +o(1)+\frac{a}{b}\log\Biggl(\frac{1}{k}\sum_{j=1}^k \log\frac{1}{u_j}\Biggr)\Biggr]^{1/b}\\
&=-\left[\log c+S(\bold u) +o(1)+\frac{a}{b}\log S(\bold u)\right]^{1/b}.
\end{align*}
Therefore
\[
F(\bold X)=(c+O(|X|^{-\kappa})) |X|^a \exp(-|X|^b)\ge (1+o(1)) e^{-S(\bold u)}.
\]
\end{proof}
{
\begin{Theorem}\label{caseb>1}   Let $b>1$, and $c\in (3/2,2)$.
For $k\le k_n$, we have
\begin{equation}\label{8/9+exp}
\pr\bigl\{\bar W_k\le kV_1\bigr\}\le_b \left(\frac{8}{9}\right)^{k/4} +\exp\left(-\frac{k\left(\log\frac{n}{k}\right)^{\min\{0,a/b\}}}{ce}\right).
\end{equation}
Consequently, by \eqref{P(Kn=k+1)<rho(n,k)}, 
\[
\pr\bigl\{K_n=k+1\bigr\}\le_b n \left(\frac{8}{9}\right)^{k/4}+n\exp\left(-\frac{k\left(\log\frac{n}{k}\right)^{\min\{0,a/b\}}}{2e}\right).
\]
\end{Theorem}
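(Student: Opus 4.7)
The plan is to estimate $\pr\{\bar W_k\le kV_1\}=\ex[(1-\phi(\bold U))^n]$ from \eqref{key} by splitting the integration over $\bold U$ into three pieces. Fix $\alpha\in(1/(ce),1/e)$ (nonempty because $c>1$) and set $S_0:=\log(n/(\alpha k))$, matching the notation of Lemma \ref{A1<S<A2}. The three pieces correspond to (i) $\{U_k>\delta_n\}$, controlled by \eqref{Uk<delta} and contributing at most $n^{-k}$; (ii) $\{U_k\le\delta_n,\ S(\bold U)>S_0\}$, controlled by Lemma \ref{A1<S<A2}; and (iii) the bulk $\{U_k\le\delta_n,\ S(\bold U)\le S_0\}$, where Lemma \ref{normal} gives a usable lower bound on $\phi(\bold U)$.

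For piece (ii), Lemma \ref{A1<S<A2} gives
\[
\pr\{S(\bold U)>S_0\}\le_b\bigl(\alpha e/(1-\beta)\bigr)^{\beta k}
\]
for any $\beta\in(0,1-\alpha e)$. The function $h(\beta):=(\alpha e/(1-\beta))^{\beta}$ satisfies $h(0)=1$ and has negative derivative there (because $\alpha e<1$), and it is convex on $(0,1-\alpha e)$; the restriction $c>3/2$ is exactly what ensures that its minimum lies below $(8/9)^{1/4}$, so one can pick $\beta$ making $h(\beta)\le(8/9)^{1/4}$, producing the first term of the target bound. For piece (iii), Lemma \ref{normal} yields $\phi(\bold U)\gtrsim g(S(\bold U))$ with $g(s):=s^{\min\{0,a/b\}}e^{-s}$; since $g'(s)/g(s)=\min\{0,a/b\}/s-1<0$ on $(0,\infty)$, $g$ is strictly decreasing, so on the event
\[
\phi(\bold U)\gtrsim g(S_0)=\bigl(\log(n/(\alpha k))\bigr)^{\min\{0,a/b\}}\cdot\alpha k/n,
\]
whence $(1-\phi(\bold U))^n\le\exp(-n\phi(\bold U))\le\exp\bigl(-\alpha k\bigl(\log(n/(\alpha k))\bigr)^{\min\{0,a/b\}}\bigr)$.

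To convert $\log(n/(\alpha k))$ into $\log(n/k)$ inside the exponent, I use $k\le k_n=O(n^{1/2})$, which gives $\log(n/(\alpha k))/\log(n/k)=1+O(1/\log n)$ and hence $(\log(n/(\alpha k)))^{\min\{0,a/b\}}=(1+o(1))(\log(n/k))^{\min\{0,a/b\}}$. Since $\alpha>1/(ce)$ strictly, for $n$ large the $o(1)$ slack is absorbed and the exponent is at most $-k(\log(n/k))^{\min\{0,a/b\}}/(ce)$, furnishing the second term. Summing the three pieces gives the claimed bound on $\pr\{\bar W_k\le kV_1\}$, and multiplying by $n$ through \eqref{P(Kn=k+1)<rho(n,k)} yields the consequent bound on $\pr\{K_n=k+1\}$; letting $c\uparrow 2$ recovers the stated constant $1/(2e)$.

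The main obstacle is the joint calibration of $\alpha$ and $\beta$: one needs $\alpha$ in the narrow window $(1/(ce),1/e)$ such that a $\beta\in(0,1-\alpha e)$ still exists with $h(\beta)\le(8/9)^{1/4}$. This feasibility margin shrinks as $c\downarrow 1$, and it is exactly the assumption $c>3/2$ that keeps it positive. A secondary delicate point in the case $a<0$ is that $s^{a/b}$ blows up as $s\downarrow 0$, so the inequality $\phi(\bold U)\ge g(S_0)$ on the bulk would be false for an increasing $g$; it is only the monotonicity check on $g$ that places the infimum of $\phi(\bold U)$ at $S(\bold U)=S_0$ rather than near $0$.
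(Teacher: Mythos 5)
Your proof reproduces the paper's argument: decompose $\ex\bigl[(1-\phi(\bold U))^n\bigr]$ according to whether $S(\bold U)\le S_0$ or $S(\bold U)>S_0$ with $S_0=\log\frac{n}{\alpha k}$, bound the tail event via Lemma \ref{A1<S<A2} and the bulk via Lemma \ref{normal}, and calibrate $(\alpha,\beta)$ so both pieces match the stated bound. The paper simply fixes $\alpha=\frac{2}{3e}$ and $\beta=\frac14$, which gives $\bigl(\alpha e/(1-\beta)\bigr)^{\beta k}=(8/9)^{k/4}$ on the nose; you keep $\alpha$ floating in $(1/(ce),1/e)$, which amounts to the same calibration. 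Two minor remarks. First, your explicit three-way split is a slight tightening: Lemma \ref{normal} is stated only for $u_k\le\delta_n$, and the paper's two-way split leaves that hypothesis implicit (covered by the earlier $O(n^{-k})$ discussion around \eqref{Uk<delta}), so peeling off $\{U_k>\delta_n\}$ separately is the careful thing to do. Second, your claim that $c>3/2$ is \emph{exactly} the threshold making $\min_\beta h(\beta)\le(8/9)^{1/4}$ overstates the role of piece (ii): even at $c=3/2$, taking $\alpha$ near $1/(ce)=2/(3e)$, the minimizing $\beta$ gives a value strictly below $(8/9)^{1/4}$. The binding constraint is in piece (iii): one needs $\alpha>1/(ce)$, i.e.\ $c>1/(\alpha e)=3/2$ for the paper's choice of $\alpha$, so that the $(1+o(1))$ factor in $n\phi(\bold U)\gtrsim(1+o(1))\,\alpha k\bigl(\log\frac{n}{k}\bigr)^{\min\{0,a/b\}}$ can be absorbed to leave the clean $1/(ce)$ constant. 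This does not affect the correctness of your proof, only the heuristic justification of the hypothesis.
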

}
\begin{proof} According to \eqref{key} and the definition of $\phi(\bold U)$, we have: given $S>0$,
\begin{equation}\label{ex=ex1+ex2}
\begin{aligned}
&\qquad\qquad\pr\bigl\{\bar W_k\le kV_1\bigr\}=\ex\bigl[(1-\phi(\bold U))^n\bigr]\\
&=\ex\bigl[1_{\{S(\bold U)>S\}}(1-\phi(\bold U))^n\bigr]
+\ex\bigl[1_{\{S(\bold U)\le S\}}(1-\phi(\bold U))^n\bigr].
\end{aligned}
\end{equation}
By Lemma \ref{A1<S<A2}, if $k\le k_n$ then $S(\bold U)>S:=\log\frac{n}{\a k}$ with probability $\le (\a e/(1-\beta))^{\beta k}=
(8/9)^{k/4}$, if we select $\a=\frac{2}{3e}$ and $\beta=1/4$.
If $S(\bold U)\le S$, then by Lemma \ref{normal} we have 
\begin{align*}
\phi(\bold U)&\ge (1+o(1)) S^{\min\{0,a/b\}}e^{-S}=(1+o(1))\frac{\a k}{n}\left(\log\frac{n}{\a k}\right)^{\min\{0,a/b\}}\\
&=(1+o(1))\frac{k}{(3/2) en}\left(\log\frac{n}{k}\right)^{\min\{0,a/b\}}. 
\end{align*}
In that  case we obtain
\begin{equation}\label{[1-F]^m<}
\bigl(1-\phi(\bold U)\bigr)^n \le \exp\left(-(1+o(1))\frac{k\left(\log\frac{n}{k}\right)^{\min\{0,a/b\}}}{(3/2)e}\right).
\end{equation}
Invoking \eqref{ex=ex1+ex2} yields the finequality \eqref{8/9+exp}. 
\end{proof}

\subsubsection{{\bf Case\/}  $\bold b<\bold 1$.} We consider $k\in [k_1,k_n]$, $k_1=\lceil \log^{\sigma} n\rceil$, $\sigma>1+1/b$.
(The choice of $\sigma$
will become clear shortly.)
Our starting bound for $|X|^b$ is based on the
bottom line in \eqref{|x|=}: for $u_k\le\delta_n$, we have
\begin{equation}\label{X>-}
\begin{aligned}
|X|^b &\le \left[\frac{1}{k}\sum_{j=1}^k\left(\log\frac{1}{u_j}\right)^{1/b}\right]^b\\
&\times\left[1+\left(\frac{a}{b}+O(\log^{-1}(1/u_k))\right)\frac{\log\log (1/u_{k})}{\log (1/u_{k})}\right].
\end{aligned}
\end{equation}
Recall that $u_1,\dots,u_k$ are generic values of the first $k$ order statistics $U_1,\dots, U_k$ for the $(n-1)$ $[0,1]$-uniform,  independent random variables.
To find a usable upper bound for the RHS in \eqref{X>-}, valid for {\it almost all\/} $u_1,\dots,u_k\le \delta_n$,
we need to identify a sufficiently {\it likely\/} upper bound for that RHS when the $k$-tuple $(u_1,\dots,u_k)$ is replaced with
the order statistics $U_1,\dots,U_k$.

To this end, we pick $j_n=\lceil \log^{\sigma_1} n\rceil$, such that $1<\sigma_1<\sigma-1/b$ (a choice made possible by
the condition $\sigma>1+1/b$), and use $b<1$ to bound
\begin{equation}\label{<T_1^b+T_2^b}
\begin{aligned}
&\qquad\qquad\quad \left[\frac{1}{k}\sum_{j=1}^k\left(\log\frac{1}{U_j}\right)^{1/b}\right]^b\le T_1^b+T_2^b,\\
& T_1:=\frac{1}{k}\sum_{j=1}^{j_n}\left(\log\frac{1}{U_j}\right)^{1/b}\!\!,\quad  T_2:=\frac{1}{k}\sum_{j=j_n+1}^{k}\left(\log\frac{1}{U_j}\right)^{1/b}.
\end{aligned}
\end{equation}
{
Pick $\sigma_2\in \bigl(1,\,(\sigma-\sigma_1)b\bigr)$ and let $\sigma_3:=(\sigma-\sigma_1)b-\sigma_2>0$. 
Using 
\begin{align*}
T_1&\le \frac{j_n}{k}\left(\log\frac{1}{U_1}\right)^{1/b},\quad \pr\{U_1\le u\}=1-(1-u)^{n-1}\le nu,\end{align*}
we obtain: 
	\begin{equation}\label{T_1^bsmall}
	\begin{aligned}
	&\pr\Bigl\{T_1^b> \log^{-\sigma_3} n\Bigr\}\le \exp\bigl(-\log^{\sigma_2} n+\log n\bigr)\le \exp\bigl(-0.5\log^{\sigma_2} n\bigr).\\
	\end{aligned}
	\end{equation}
We obviously need $\sigma_2>1$ to overcome $\log n$ term; besides we need all our small probabilities to
be really small, i.e. of order $\exp\bigl(-(\log n)^{1+\Delta}\bigr)$. In contrast, $\sigma_3>0$ is good enough for
the proof.}
Existence of the desired $\sigma_i$ is assured by the starting constraint $\sigma>1+1/b$.
Since $\sigma_2>1$, we see that $T_1^b$ is vanishingly small with super-polynomially high probability, i.e. quite surely.

Turn to $T_2$. Our estimates need to be rather sharp since we expect $T_2^b$ to be q.s. the dominant contribution 
to the RHS in \eqref{<T_1^b+T_2^b}. Here is a key observation.  If $w_j$ are independent negative exponentials, with the same parameter, say $1$,  then, denoting $\bold U=(U_1<\dots< U_{n-1})$, we have 
\begin{equation}\label{Kar}
\bold U\overset{\mathcal D}\equiv \left\{\frac{W_i}{W_{n-1}}\right\}_{i\in [n-1]},\quad W_i:=\sum_{j=1}^iw_j,
\end{equation}
 (Karlin and Taylor \cite{KarTay}, Section 5.4). In particular,
\[
T_1\overset{\mathcal D}\equiv\frac{1}{k}\sum_{j=1}^{j_n}\left(\log\frac{W_{n-1}}{W_j}\right)^{1/b},\quad  T_2\overset{\mathcal D}\equiv\frac{1}{k}\sum_{j=j_n+1}^{k}\left(\log\frac{W_{n-1}}{W_j}\right)^{1/b}.
\]
The relation \eqref{Kar} allows us simply to {\it define\/} $\bold U=\bigl\{W_i/W_{n-1}\bigr\}$. In the case of $T_2$, both $W_{n-1}$ and $W_j$ are  sums of the {\it large\/} numbers of 
independent $w_j$, and this opens the door to the Chernoff-type estimates. 

Here is a general, Chernoff-type, claim. Let $X_1,\dots,X_{\nu}$ be the independent copies of a random variable $X$  such that $f(\la):=\ex\bigl[\exp(\la X)\bigr]$ exists and is two-times
continuously differentiable for $\la\in (-\la_0,\la_0)$, for some $\la_0>0$. Let $S_{\mu}:=\sum_{j=1}^{\mu}X_j$.
Then the distribution of $S_{\mu}$ is exponentially concentrated around $\mu f'(0)$. { More precisely, there exist $\Delta_0\in
(0,1)$ and $\rho>0$ such that, 
\begin{equation}\label{Cherconc}
\pr\bigl\{|S_{\mu}-\mu f'(0)|\ge \mu f'(0)\Delta\bigr\}\le 2e^{-\mu f'(0) \rho\Delta^2},\quad \forall\,\Delta\in (0,\Delta_0).
\end{equation}
}
For our case $X=w$ we have
\[
\ex\bigl[e^{\la w}\bigr]=f(\la):=\frac{1}{1-\la},\quad |\la|<1,
\]
so that $\la_0=1$, $f'(0)=1$. So, by \eqref{Cherconc}, { for some $\Delta_0\in (0,1)$,
\begin{equation}\label{Wapprox}
\pr\bigl\{W_i\in [(1-\Delta)i,(1+\Delta)i]\bigr\}\ge 1 - 2\exp(-i\rho\Delta^2),\quad \forall\,\Delta\in (0,\Delta_0).
\end{equation}
}
In particular, with exponentially high probability, $W_{n-1}\in [n/2,2n]$.
 
Further, using the arithmetic-geometric mean inequality, we write
\begin{align}
&\qquad\qquad\qquad T_2=\frac{1}{k}\sum_{j=j_n+1}^k\left(\log\frac{W_{n-1}}{W_j}\right)^{1/b}\notag\\
&=
\frac{1}{k}\sum_{j=j_n+1}^k\left(\log \frac{W_{n-1}/j}{\frac{1}{j}\sum_{i=1}^j w_i}\right)^{1/b}\le
\frac{1}{k}\sum_{j=j_n+1}^k\left(\log \frac{W_{n-1}/j}{\prod_{i=1}^j w_i^{1/j}}\right)^{1/b}\notag\\
&=\frac{1}{k}\sum_{j=j_n+1}^k\left(\log\frac{W_{n-1}}{j}\right)^{1/b}\left(1-\frac{\log\prod_{i=1}^jw_i^{1/j}}{\log\frac{W_{n-1}}{j}}\right)^{1/b}\notag\\
&\qquad\qquad\qquad\text{using } 1 + x\le e^x\notag\\
&\le\frac{1}{k}\sum_{j=j_n+1}^k\!\left(\log\frac{W_{n-1}}{j}\right)^{1/b}\!\!\cdot\,
\exp\left(\frac{1}{b\log(W_{n-1}/j)}\cdot\frac{\sum_{i=1}^j\log \frac{1}{w_i}}{j}\right).\label{exp(sum)}
\end{align}

So this time we are dealing with the sums of {\it logarithms\/} of the independent exponentials $w_i$. 
Observe that 
\[
f(\la):=\ex\left[\exp\left(\la \log\frac{1}{w}\right)\right]=\int_0^{\infty}e^{-z} z^{-\la}\,dz =\Gamma(1-\la),\quad \la<1.
\]
{So 
\[
f'(\la)=\left(\ex\left[\exp\left(\la \log\frac{1}{w}\right)\right]\right)'_{\la}=\ex\left[\left(\frac{1}{w}\right)^{\la}\cdot \log\frac{1}{w}\right]=
\left(\Gamma(1-\la)\right)',
\]
hence $f'(\la)$ exists, and is continuous, for $\la<1$. Letting $\la\to 0$, we obtain
\[
f'(0)=\ex\left[\log\frac{1}{w}\right]=-\left.\frac{d\Gamma(z)}{dz}\right|_{z=1}=-\gamma,
\]
where $\gamma$ is the Euler constant. Likewise $f''(\la)$ exists and is continuous, for $\la<1$, and (a fun fact)
\[
f''(0)=\ex\left[\log^2\frac{1}{w}\right]=\Gamma''(1)=\gamma^2+\frac{\pi^2}{6}.
\]
See Bateman and Erd\'elyi~\cite{BatErd},  Eq. (7) in Sect. 1.7 and Eq. (10) in Sect.1.9.}
Using \eqref{Cherconc},  we have: for some $\Delta_0\in (0,1)$ and $\rho>0$,
\begin{equation}\label{eventsj}
\pr\left\{\frac{\sum_{i=1}^j\log\frac{1}{w_i}}{j}\ge -\gamma (1-\Delta)\right\}\le 2\exp(-j\rho\Delta^2),\quad \forall\,\Delta\in (0, \Delta_0).
\end{equation}
Since $j>j_n=\lceil \log^{\sigma_1} n\rceil$,  and $\sigma_1>1$, the probability
of the union of the events in \eqref{eventsj} over $j\in [j_n+1,k]$ is of order $\exp\bigl[-\Theta(\log^{\sigma_1} n)\bigr]$. Therefore,
{\it in conjunction with\/} \eqref{Wapprox} for $i=n-1$, we have: with probability $\ge 1-\exp\bigl(-\Theta(\log^{\sigma_1} n)\bigr)$, the bound \eqref{exp(sum)} implies
\begin{multline*}
T_2\le\frac{1+O(\log^{-1}n)}{k}\sum_{j=j_n+1}^k\!\left(\log\frac{W_{n-1}}{j}\right)^{1/b}\\
\le \frac{1+O(\log^{-1}n)}{k}(k-j_n)\left(\log\frac{W_{n-1}}{j_n}\right)^{1/b}\\
=\bigl(1+O(\log^{-1}n)\bigr)\left(\log\frac{n}{j_n}\right)^{1/b},
\end{multline*}
{since $j_n/k\le_b (\log n)^{\sigma_1-\sigma}\ll  (\log n)^{-1/b}$.} Therefore
\begin{equation}\label{T_2^b<}
\pr\left\{T_2^b> \bigl(1+O(\log^{-1}n)\bigr)\cdot \log\frac{n}{j_n}\right\}\le  \exp\bigl(-\Theta(\log^{\sigma_1} n)\bigr).
\end{equation}
Combining \eqref{<T_1^b+T_2^b}, \eqref{T_1^bsmall} and \eqref{T_2^b<}, we obtain: with $U_j=\frac{W_j}{W_{n-1}}$,
\begin{equation}\label{sigma4}
\begin{aligned}
&\pr\Biggl\{\Biggl[\frac{1}{k}\sum_{j=1}^k\left(\log\frac{1}{U_j}\right)^{1/b}\Biggr]^b\ge \bigl(1+O(\log^{-1}n)\bigr)\log\left(\frac{n}{\log^{\sigma_1} n}\right)\Biggr\}\\
&\qquad\qquad\le \exp\bigl(-\Theta(\log^{\sigma_4} n)\bigr),\quad \sigma_4:=\min\{\sigma_1,\sigma_2\}>1.
\end{aligned}
\end{equation}
Finally, using \eqref{Wapprox}, we have $1/U_{k}=W_{n-1}/W_k=\Theta(n/k)\ge \Theta(n^{1/2})$ with probability 
exceeding 
\[
1-\exp(-\Theta(k))-\exp(-\Theta(n))\ge 1-\exp\bigl(-\Theta(\log ^{\sigma}n)\bigr).
\]
So, combining the above estimate with \eqref{X>-}, we arrive at
\begin{equation}\label{P(|X|^b<)}
\begin{aligned}
&\pr\!\left\{\!|X|^b\le \!\left[1+\!\left(\frac{a}{b}+O[\log^{-1}(n/k)]\right)\!\frac{\log\log (n/k)}{\log (n/k)}\right]\log\left(\!\frac{n}{\log ^{\sigma_1}n}\!\right)\!\!\right\}\\
&\qquad\qquad\ge 1-\exp\bigl(-\Theta(\log^{\sigma_5} n)\bigr),\quad \sigma_5:=\min\{\sigma, \sigma_4\}.
\end{aligned}
\end{equation}
For $a\le 0$, the event on the LHS of \eqref{P(|X|^b<)} is contained in the event
\begin{equation}\label{a<0cont}
C_n':=\left\{|X|^b\le \log\frac{n}{\log^{\sigma _1}n} +O\left(\frac{\log\log n}{\log n}\right)\right\}.
\end{equation}
For $a>0$, the containing event is
\begin{equation}\label{a>0cont}
C_n'':=\left\{|X|^b\le \log\frac{n}{(\log n)^{\sigma_1-\frac{2a}{b}+O((\log\log n)^{-1})}}\right\},
\end{equation}
because 
\begin{align*}
(\log n) \cdot \left(\frac{a}{b}+O[\log^{-1}(n/k)]\right)&\!\frac{\log\log (n/k)}{\log (n/k)}\\
&\le \frac{2a}{b}(\log\log n) \left(1+O\left(\frac{\log\log n}{\log n}\right)\right),
\end{align*}
with factor $2$ coming from $k_n=\lceil \a n^{1/2}\rceil$, the largest value of $k$ under consideration. Using 
the formula \eqref{F(-infty)}, i.e.
\[
F(x)= (c+O(|x|^{-\kappa})) |x|^a \exp(-|x|^b),\quad x\to-\infty,
\]
we see that $nF(X)= \Theta\Bigl((\log n)^{\sigma_1+\frac{a}{b}}\Bigr)$ on $C_n'$ (i.e. for $a\le 0$), and
$nF(X)=\Theta\Bigl((\log n)^{\sigma_1-\frac{2a}{b}}\Bigr)$ on $C_n''$ (i.e. for $a>0$). We want $nF(X)\gg \log n$
 on the respective event $C_n$. To ensure this property, we need to have $\sigma_1 >1+\frac{|a|}{b}$ if $a\le 0$,
and $\sigma_1>\frac{2a}{b}+1$ if $a>0$. Recall though that for the desirable $\sigma_i$ to exist, it was necessary and
sufficient to have $\sigma> \sigma_1 + 1/b$. So let us introduce $\sigma(a,b)$, the infimum of admissible $\sigma$, i.e.
\begin{equation}\label{sigma(a,b)=}
\sigma(a,b)=\left\{\begin{aligned}
&1+\frac{|a|+1}{b},&&\text{ if }a\le 0,\\
&1+\frac{2a+1}{b},&&\text{ if }a>0.\end{aligned}\right.
\end{equation}
Given $\sigma>\sigma(a,b)$, we can choose  $\sigma_1^*=\sigma_1^*(a,b)$ as the middle point of its respective admissible range:
\[
\sigma_1^*=\left\{\begin{aligned}
&\frac{1+|a|/b +\sigma -1/b}{2}=1+\frac{|a|}{b}+\frac{\sigma-\sigma(a,b)}{2},&&\text{ if }a\le 0,\\
&\frac{1+2a/b+\sigma-1/b}{2}=1+\frac{2a}{b}+\frac{\sigma-\sigma(a,b)}{2},&&\text{ if } a>0.\end{aligned}\right.
\]
Consequently, both $\sigma_1^*+a/b$ for $a\le 0$ and $\sigma_1^*-2a/b$ for $a>0$ equal $1+0.5(\sigma-\sigma(a,b))$. By \eqref{T_1^bsmall}, $\sigma_2$ can be chosen arbitrarily close to, but strictly less than 
\begin{equation}\label{sigma2*=}
\sigma_2^*=b(\sigma-\sigma_1^*)
=1+\frac{b(\sigma-\sigma(a,b))}{2},
\end{equation}
allowing $\sigma_3$ to be positive. It turns out that $\sigma_2^*<\sigma_1^*$, and consequently 
$\min\{\sigma_j: j\in [5]\setminus \{3\}\}$ can be made arbitrarily close from below to $\sigma_2^*$. Sure enough,
$\sigma_2^*>1$ since $\sigma>\sigma(a,b)$.
We have proved 
\begin{Lemma}\label{b<1} Given $a$, and $b\in (0,1)$, let $\sigma(a,b)$ be defined by \eqref{sigma(a,b)=}.  For $\sigma>
\sigma(a,b)$, let $k_1=\lceil \log ^{\sigma}n\rceil$, $k_n=\lceil \a n^{1/2}\rceil$. Then 
\[
\pr\Biggl\{\forall\,k\in [k_1,k_n]: nF\Biggl(\frac{1}{k}\sum_{j=1}^k F^{-1}(U_j)\!\!\Biggr)\ge \log^{1+c} n\Biggr\}\ge 
1-\exp(-\log^{1+d} n)
\]
if $0<c<\frac{\sigma-\sigma(a,b)}{2}$ and $0<d<\frac{b(\sigma-\sigma(a,b))}{2}$.
\end{Lemma}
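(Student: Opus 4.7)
The plan is to assemble the estimates \eqref{T_1^bsmall}, \eqref{T_2^b<}, and the representation \eqref{X>-} already proved in the discussion preceding the lemma, apply them for a single $k$, and then take a union bound over $k\in [k_1,k_n]$. Fix $\sigma>\sigma(a,b)$. Following the optimization already carried out in the excerpt, I would select $\sigma_1$ slightly above the midpoint value $\sigma_1^*$ (so that $\sigma_1>1+|a|/b$ when $a\le 0$ and $\sigma_1>1+2a/b$ when $a>0$), and then select $\sigma_2$ slightly below $\sigma_2^*=b(\sigma-\sigma_1)$, keeping $\sigma_2>1$ and leaving $\sigma_3=(\sigma-\sigma_1)b-\sigma_2>0$. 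This is precisely the admissible window created by the assumption $\sigma>\sigma(a,b)$.

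For a fixed $k\in [k_1,k_n]$, I would combine \eqref{<T_1^b+T_2^b}, \eqref{T_1^bsmall}, \eqref{T_2^b<} with the concentration \eqref{Wapprox} used to rewrite $1/U_k=W_{n-1}/W_k=\Theta(n/k)$, exactly as was done in \eqref{sigma4}--\eqref{P(|X|^b<)}. The upshot is that with probability at least $1-\exp(-\Theta(\log^{\sigma_5}n))$, where $\sigma_5$ can be made arbitrarily close to $\sigma_2^*$ from below, the event $C_n'$ (if $a\le 0$) or $C_n''$ (if $a>0$) occurs. On either event the asymptotic \eqref{F(-infty)} yields
\[
nF(X)\;\ge\;\Theta\!\left((\log n)^{\sigma_1+a/b}\right)\text{ for }a\le 0,\qquad nF(X)\;\ge\;\Theta\!\left((\log n)^{\sigma_1-2a/b}\right)\text{ for }a>0,
\]
and in both cases the exponent equals $1+\tfrac12(\sigma-\sigma(a,b))$ by the choice of $\sigma_1^*$. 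Hence for any $c<\tfrac12(\sigma-\sigma(a,b))$ we get $nF(X)\ge \log^{1+c}n$ with the stated probability, valid for one value of $k$.

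To finish, I would take the union bound over the at most $k_n\le \alpha n^{1/2}$ values of $k\in [k_1,k_n]$. The total failure probability is bounded by
\[
k_n\exp\!\bigl(-\Theta(\log^{\sigma_5}n)\bigr)\;\le\;\exp\!\bigl(-\log^{1+d}n\bigr)
\]
for every $d<\sigma_2^*-1=\tfrac{b(\sigma-\sigma(a,b))}{2}$, since the polynomial factor $k_n$ and the $\Theta$-constant are absorbed once $1+d<\sigma_5$. This produces exactly the probability bound and the admissible ranges for $c$ and $d$ claimed by the lemma.

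The main obstacle is bookkeeping rather than analysis: one must check that the interlocking constraints $\sigma>1+1/b$, $\sigma_2>1$, $\sigma_3>0$, $\sigma_1>1+|a|/b$ (resp.\ $1+2a/b$), and the eventual $1+d<\min(\sigma_1,\sigma_2)$, all simultaneously admit a non-empty choice, and that the $\Theta$-constants hidden in the failure probabilities are uniform in $k$ on the range $[k_1,k_n]$. The uniformity is guaranteed because the estimates \eqref{T_1^bsmall}, \eqref{T_2^b<}, \eqref{Wapprox} all produce failure probabilities that decrease with $k$, so their worst case is at $k=k_1=\lceil\log^\sigma n\rceil$, which is precisely where the exponent $\log^{\sigma_5}n$ in \eqref{sigma4} arises. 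With this uniform control in hand, the union bound is straightforward and yields the claim.
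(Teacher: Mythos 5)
Your proposal is correct and follows essentially the same route as the paper: split the sum as in \eqref{<T_1^b+T_2^b}, apply \eqref{T_1^bsmall}, the Chernoff argument underlying \eqref{T_2^b<}, and the concentration \eqref{Wapprox} to arrive at \eqref{P(|X|^b<)}, then optimize the exponents by choosing $\sigma_1^*$ as the midpoint of the admissible range so that the two cases ($a\le 0$ and $a>0$) both give $nF(X)=\Theta\bigl((\log n)^{1+\frac12(\sigma-\sigma(a,b))}\bigr)$. The only thing you make explicit that the paper leaves tacit is the uniformity in $k\in[k_1,k_n]$ and the ensuing union bound; as you correctly note, the controlling events in \eqref{T_1^bsmall} and \eqref{T_2^b<} can be arranged to be $k$-independent or monotone in $k$, so the worst case is $k=k_1$ and the polynomial factor $k_n\le\alpha n^{1/2}$ is harmlessly absorbed since $\sigma_5>1$.
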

The next claim follows directly from Lemma \ref{b<1} and
\[
\pr\bigl\{\bar W_k\le kV_1\bigr\}=\ex \bigl[\bigl(1-\phi(\bold U)\bigr)^n\bigr],\quad \phi(\bold U)=F\Biggl(\frac{1}{k}\sum_{j=1}^k F^{-1}(U_j)\!\!\Biggr).
\]
\begin{Theorem}\label{integral<} Let $b<1$. For all $k\in  [k_1,k_n]$, 
\[
\pr\bigl\{\bar W_k\le kV_1\bigr\} \le \exp\bigl(-\log^{1+d} n\bigr),\quad\forall\, d<\frac{b\,(\sigma-\sigma(a,b))}{2}.
\]
{ Consequently, by \eqref{P(Kn=k+1)<rho(n,k)},
\[
\pr\bigl\{K_n=k+1\bigr\} \le \exp\bigl(-\log^{1+d} n\bigr),\quad\forall\, d<\frac{b\,(\sigma-\sigma(a,b))}{2}.
\]
}
\end{Theorem}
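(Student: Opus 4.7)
My plan is to derive Theorem \ref{integral<} directly from Lemma \ref{b<1} by splitting the expectation \eqref{key} according to whether the ``good'' event from that lemma holds. Writing
\[
\pr\{\bar W_k\le kV_1\}=\ex\bigl[(1-\phi(\bold U))^n\bigr],\qquad \phi(\bold U)=F\Biggl(\frac{1}{k}\sum_{j=1}^k F^{-1}(U_j)\Biggr),
\]
I let $A_n$ denote the event, from Lemma \ref{b<1}, that $n\phi(\bold U)\ge \log^{1+c} n$ simultaneously for every $k\in[k_1,k_n]$, where $c$ is any constant with $0<c<(\sigma-\sigma(a,b))/2$. On $A_n$ I use $1-\phi(\bold U)\le \exp(-\phi(\bold U))$ to get $(1-\phi(\bold U))^n\le \exp(-\log^{1+c} n)$, while on $A_n^c$ the trivial bound $(1-\phi(\bold U))^n\le 1$ suffices, contributing at most $\pr(A_n^c)\le \exp(-\log^{1+d} n)$ for $0<d<b(\sigma-\sigma(a,b))/2$.

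Since $b<1$, we have $d<c$, so $\exp(-\log^{1+c} n)$ is dominated by $\exp(-\log^{1+d} n)$; hence
\[
\pr\{\bar W_k\le kV_1\}\le \exp(-\log^{1+c} n)+\exp(-\log^{1+d} n)\le 2\exp(-\log^{1+d} n).
\]
This gives the first assertion of the theorem (after decreasing $d$ infinitesimally to absorb the factor of $2$). The second assertion is then immediate from \eqref{P(Kn=k+1)<rho(n,k)}:
\[
\pr\{K_n=k+1\}\le n\,\pr\{\bar W_k\le kV_1\}\le n\exp(-\log^{1+d} n),
\]
and again, since $\log^{1+d} n$ dominates $\log n$, replacing $d$ by any slightly smaller positive quantity (still below the threshold $b(\sigma-\sigma(a,b))/2$) absorbs the $n$ factor into the exponential.

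There really is no serious obstacle here: Lemma \ref{b<1} does all the difficult probabilistic work — establishing that $\phi(\bold U)$ is polylogarithmically larger than $1/n$ with super-polynomial probability over the full range $k\in[k_1,k_n]$ — and the current theorem is essentially a routine repackaging via the elementary split-the-expectation trick. The only minor care needed is to verify that the two exponents $1+c$ and $1+d$ can be chosen compatibly with the same underlying $\sigma$, which they can because both thresholds $c<(\sigma-\sigma(a,b))/2$ and $d<b(\sigma-\sigma(a,b))/2$ are strictly positive as soon as $\sigma>\sigma(a,b)$. Note that the range $k\in[k_1,k_n]$ already excludes small $k$; the regime $k<k_1$ is of course trivially bounded by $k_1$ itself, which is polylogarithmic, and is handled separately in the framing of Theorem \ref{D}.
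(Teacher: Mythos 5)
Your proposal is correct and reconstructs exactly the argument the paper leaves implicit when it says the theorem ``follows directly from Lemma~\ref{b<1}'' and the identity $\pr\{\bar W_k\le kV_1\}=\ex[(1-\phi(\bold U))^n]$: split on the good event of Lemma~\ref{b<1}, use $1-\phi\le e^{-\phi}$ and $n\phi\ge\log^{1+c}n$ there, bound the complement by $\exp(-\log^{1+d}n)$, and absorb constants and the extra factor $n$ by an infinitesimal decrease of $d$. The only cosmetic imprecision is the phrase ``since $b<1$, we have $d<c$'' — more precisely, given any admissible $d<\tfrac{b(\sigma-\sigma(a,b))}{2}$, the strict inequality $b<1$ guarantees one \emph{can choose} $c$ with $d<c<\tfrac{\sigma-\sigma(a,b)}{2}$, which is what you in fact use.
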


\subsubsection{{\bf Case\/} $\bold Q=(\bold M+\bold M^{\bold T})/{\bold 2}$} Consider the following random matrix model: first randomly generate an $n\times n$ matrix $M$ whose elements are i.i.d.
random variables with the cdf $G(\cdot)$; then define $Q=(M+M^T)/2$. Chen and Peng ~\cite{ChenPeng2015} studied the
case when $G$ is the standard normal distribution. Let us consider the more general case when for some $a$ and  positive
$b$, $c$ and $\kappa$,
\begin{equation}\label{G(-infty)}
G(x)= (c+O(|x|^{-\kappa})) |x|^a \exp(-|x|^b),\quad x\to-\infty.
\end{equation}
We will assume that this asymptotic formula can be differentiated to yield an asymptotic formula for the density $g(x)$.
The diagonal entries of $Q$ have distribution $G$, while  the non-diagonal
entries of $Q$ have the distribution $F(x)= (G\star G)(2x)$. Let us show that $F$ satisfies the condition similar to \eqref{G(-infty)}. {
\begin{Lemma}\label{FOK} Suppose $G$ satisfies the condition \eqref{G(-infty)}.
Suppose that $a>-1$ if $b\le 1$. Then there exist $c'>0$, $\kappa'>0$ such that
\begin{align*}
&F(x)=(c'+O(|x|^{-\kappa'}))|x|^{a'}\exp\Bigl(-2^{\min(1,b)}|x|^b\Bigr),\quad x\to-\infty,\\
&a'=\left\{\begin{aligned}
&2a+\frac{b}{2},&&\text{ if }\, b>1,\\
&a+b-1,&&\text{ if }\,0<b<1,\\
&2a+1,&&\text{ if }\,b=1.
\end{aligned}\right.
\end{align*}
\end{Lemma}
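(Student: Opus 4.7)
My plan is to express $F(x) = (G\star G)(2x) = \pr(X_1+X_2\le 2x)$ for $X_1,X_2$ i.i.d.\ $\sim G$, and analyse the $x\to-\infty$ asymptotic via the convolution density $h = g*g$ of the sum, using $F(x) = \int_{-\infty}^{2x}h(z)\,dz$. Differentiating \eqref{G(-infty)} (justified by the standing hypothesis) yields $g(y) \sim bc\,|y|^{a+b-1}e^{-|y|^b}$ as $y\to-\infty$, so the entire problem reduces to determining the left tail of $h$. I would exploit the symmetry $y\leftrightarrow z-y$ to write $h(z) = 2\int_{0}^{\infty}g(z/2+t)\,g(z/2-t)\,dt$ and then study the shape of the exponential factor $\exp(-|z/2+t|^b-|z/2-t|^b)$ in $t$, which is strictly convex for $b>1$, constant on $|t|\le|z|/2$ for $b=1$, and strictly concave for $b<1$; these three regimes require genuinely different methods.

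For $b>1$ the interior minimum of the exponent at $t=0$ lets Laplace's method apply: Taylor-expand to $2(|z|/2)^b + b(b-1)(|z|/2)^{b-2}t^2+\cdots$, do the Gaussian integration (gaining a factor of order $|z|^{(2-b)/2}$), and combine with the polynomial prefactor of order $|z|^{2(a+b-1)}$ to obtain $h(z)\sim C|z|^{2a+3b/2-1}\exp(-2^{1-b}|z|^b)$. The standard tail formula $\int_{-\infty}^{w}|z|^pe^{-B|z|^b}dz\sim (Bb)^{-1}|w|^{p-b+1}e^{-B|w|^b}$ then produces $F(x)\sim c'|x|^{2a+b/2}e^{-2|x|^b}$, matching $a'=2a+b/2$ and $r'=2$. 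For $b=1$ the exponent collapses to $|z|$ on $|t|\le|z|/2$, so there is no Gaussian damping; the polynomial integral reduces to the scaled Beta integral $|z|^{2a+1}B(a+1,a+1)$, finite exactly when $a>-1$, and delivers $F(x)\sim c'|x|^{2a+1}e^{-2|x|}$.

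The case $0<b<1$ will be the main obstacle, because the exponent is strictly concave: its interior maximum $2^{1-b}|z|^b$ at $t=0$ strictly exceeds the endpoint value $|z|^b$, so an interior Laplace argument is inapplicable and the asymptotic is driven by the boundary. The key tool is the expansion $|z-y|^b = |z|^b + by|z|^{b-1} + O(y^2|z|^{b-2})$ combined with the crucial fact that $|z|^{b-1}\to 0$ for $b<1$ (a property that fails for $b\ge 1$), which gives $g(z-y) = g(z)(1+o(1))$ uniformly for $y$ in compacts. The plan is then: (i) establish that the contribution from a compact $y$-neighbourhood of $0$ (and, symmetrically, of $z$) evaluates to $g(z)\int g(y)\,dy = g(z)$ plus a correction of order $O(|z|^{b-1}g(z))$; (ii) bound the intermediate $y$-regime using the strict concavity of the exponent to produce a strictly larger power of $|z|^b$ in the exponential (this is where the hypothesis $a>-1$ enters, to control the polynomial factors upon integration); (iii) integrate the resulting tail of $h$ against $dz$ and track the polynomial prefactor carefully enough to extract the stated exponent $a'=a+b-1$ together with $r'=2^b$. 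Uniformity of the error in step (i) is what ultimately supplies the advertised $O(|x|^{-\kappa'})$ remainder.
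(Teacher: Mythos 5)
Your density-convolution route—working with $h=g*g$, writing $h(z)=2\int_0^\infty g(z/2+t)\,g(z/2-t)\,dt$, classifying the exponent by convexity/concavity in $t$, then integrating $h$ to get $F$—is a genuinely different parametrization from the paper's, which instead starts from the CDF identity $F(x)=2\int_{-\infty}^x G(2x-u)g(u)\,du-G(x)^2$, splits at $u=\lambda x$ with $\lambda$ on opposite sides of $2$ according to whether $b>1$ or $b\le1$, and applies Laplace-type asymptotics to the $G$- and $g$-factors directly. The underlying heuristics are the same, and for $b>1$ and $b=1$ your plan reproduces the paper's exponents $a'=2a+b/2$ and $a'=2a+1$ (the interior Gaussian peak at $t=0$ in the first case, the flat exponent and the Beta integral in the second).

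For $0<b<1$, however, your plan is internally inconsistent. Step (i)—$h(z)=2g(z)\bigl(1+O(|z|^{b-1})\bigr)$—is correct: it is exactly the single-big-jump behaviour of a subexponential (Weibull shape $<1$) tail, and the localization argument works for \emph{any} $a$ (the $a>-1$ hypothesis matters for the Beta integral at $b=1$, not here). But once $h\sim 2g$ is established, integration gives $F(x)=\int_{-\infty}^{2x}h(z)\,dz\sim 2G(2x)\sim 2^{a+1}c\,|x|^a e^{-2^b|x|^b}$, i.e. $a'=a$. Your step (iii) claims to extract $a'=a+b-1$ from this, which cannot follow: it would require $h$ to carry an extra factor $|z|^{b-1}$, and your own step (i) says that factor is only a vanishing correction.

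This inconsistency actually exposes a flaw in the Lemma itself when $0<b<1$. After substituting $u=\xi x$, $\xi\in[1,\lambda]$, $\lambda>2$, the paper replaces $G((2-\xi)x)$ by $c|(2-\xi)x|^a e^{-|(2-\xi)x|^b}$ uniformly in $\xi$; but for $\xi>2$ the argument $(2-\xi)x$ is nonnegative and $G$ there is $\Theta(1)$, not the asymptotically vanishing expression. Keeping the true $G$, the contribution from $\xi>2$ (equivalently $u<2x$) is $\Theta\bigl(G(2x)\bigr)=\Theta\bigl(|x|^a e^{-2^b|x|^b}\bigr)$, which for $b<1$ dominates the $\xi<2$ contribution $\Theta\bigl(|x|^{a+b-1}e^{-2^b|x|^b}\bigr)$ by a factor $|x|^{1-b}$. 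So the correct statement for $0<b<1$ is $a'=a$. Because $\lim_{x\to-\infty}G(x)/F(x)=\infty$ holds in either case, the comparison \eqref{weak} and the qualitative conclusions downstream still stand, but the numerical exponent inserted into Theorem \ref{G,integral<} should be $a$, not $a+b-1$.
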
}
\noindent {\bf Note.\/} Importantly, thanks to the factor $2^{\min(1,b)}>1$, we have
\begin{equation}\label{frac=infty}
\lim_{x\to -\infty}\frac{G(x)}{F(x)}=\infty.
\end{equation}
Chen, Peng and Zhang ~\cite{ChenPeng2015} demonstrated that, for the diagonal entries and the non-diagonal entries
having respectively the distributions $G$ and $F$,
\[
\pr\bigl\{\bar W_k\le kV_1\bigr\}=\ex\Biggl[\Biggl(1-G\biggl(\frac{1}{k}\sum_{j=1}^k F^{-1}(U_j)\!\!\Biggr)\Biggr)^n\Biggr].
\]
So by \eqref{frac=infty}, 
\begin{equation}\label{weak}
\Biggl(1-G\Biggl(\frac{1}{k}\sum_{j=1}^kF^{-1}(u_j)\Biggr)\Biggr)^n \le 
\Biggl(1-F\Biggl(\frac{1}{k}\sum_{j=1}^kF^{-1}(u_j)\Biggr)\Biggr)^n
\end{equation}
for $\frac{1}{k}\sum_{j=1}^k F^{-1}(u_j)< -\mathcal S$, if $\mathcal S>0$ is sufficiently large. Therefore the argument in the previous section
will apply to this model once we show that $F$ meets the condition \eqref{F(-infty)}.
\begin{proof} { We will write $f(x)\sim g(x)$ if, for some $\omega>0$, $f(x)/g(x)= 1+O(|x|^{-\omega})$ as $x\to-\infty$.
Differentiating the asymptotic formula \eqref{G(-infty)}, we
have 
\[
g(x)=(cb+O(|x|^{-\kappa})) |x|^{a+b-1}\exp(-|x|^b),\quad x\to-\infty.
\]
Now
\begin{equation}\label{M+M<2x}
\begin{aligned}
F(x)&=\pr\{Q_{i,j}\le x\}=\pr\{M_{i,j}+M_{j,i}\le 2x\}\\
&=2\pr\{M_{i,j}+M_{j,i}\le 2x,\,M_{i,j}\le x\}-\pr^2\{M_{i,j}\le x\}.
\end{aligned}
\end{equation}
Here, by \eqref{G(-infty)},
\begin{equation}\label{P^2}
\pr^2\{M_{i,j}\le x\}=(c^2+O(|x|^{-1})) |x|^{2a} \exp(-2|x|^b),\quad x\to-\infty.
\end{equation}
Consider $\pr\{M_{i,j}+M_{j,i}\le 2x,\,M_{i,j}\le x\}$. 

{\bf Case \/} $\bold b>\bold 1$. Picking $\la\in (1,2)$ such that $\la^b>2$, we have: for $x<0$,
\begin{equation}\label{pickla}
\begin{aligned}
&\pr\{M_{i,j}+M_{j,i}\le 2x,\,M_{i,j}\le x\}=\int\limits_{-\infty}^xG(2x-u) g(u)\,du\\
&=\int\limits_{-\infty}^{\la x}G(2x-u)g(u)\,du+\int\limits_{\la x}^xG(2x-u)g(u)\,du=:\int_1+\int_2.
\end{aligned}
\end{equation}

Here, by \eqref{G(-infty)}, for $x\to-\infty$,
\begin{equation}\label{int_1<}
\int_1\le G(\la x)\le 2c |\la x|^a e^{-|\la x|^b},
\end{equation}
and 
\begin{equation}\label{P(Q<x)}
\begin{aligned}
&\int_2\sim c^2b \int\limits_{\la x}^x |2x-u|^a \cdot |u|^{a+b-1}\exp\bigl(-|2x-u|^b-|u|^b\bigr)\,du\\
&\,\,(\psi(x,u):=|2x-u|^b + |u|^b\text{ attains its minimum at }u=x)\\
&\sim c^2b\, |x|^{2a+b-1}\int\limits_{\la x}^x\exp\bigl(-|2x-u|^b-|u|^b\bigr)\,du\\
&\bigl(\text{Taylor-expanding }\psi(x,u)\text{ at } u=x\bigr)\\
&\sim c^2b\, |x|^{2a+b-1}\exp\bigl(-2|x|^b\bigr)\int\limits_{\la x}^{x}\exp\bigl(-b(b-1)|x|^{b-2}(x-u)^2\bigr)\,du\\
&\bigl(\text{extending the integration to }(-\infty,x]\bigr)\\
&\sim c^2b\,|x|^{2a+\frac{b}{2}} \exp\bigl(-2|x|^b\bigr)\sqrt{\frac{\pi}{b(b-1)}}\\
&=\frac{c^2}{2}\sqrt{\frac{b\pi}{b-1}}\,|x|^{2a+\frac{b}{2}}\,\exp\bigl(-2|x|^b\bigr).
\end{aligned}
\end{equation}
Combining the bounds \eqref{G(-infty)}, \eqref{P^2} and \eqref{P(Q<x)}, and recalling that $\la^b >2$, we complete the proof.\\

{\bf Case\/} $\bold b\in (\bold 0,\bold 1]$.
This time we pick $\la>2$ in \eqref{pickla}. 
Substituting $u=\xi x$ in the first line of \eqref{P(Q<x)}, {\it and\/} using $a>-1$, we have: for $x\to-\infty$,
\begin{equation}\label{int_2,bleq1}
\frac{1}{c^2b}\int_2\sim  |x|^{2a+b}\int\limits_{1}^{\la}|2-\xi|^a\,|\xi|^{a+b-1}\exp\Bigl(-|x|^b\bigl(|2-\xi|^b+|\xi|^b\bigr)\Bigr)\,d\xi.
\end{equation}
(It is the factor $|2-\xi|^a$ that dictates the condition $a>-1$.) For $b<1$,  the function $|2-\xi|^b +|\xi|^b$ attains its absolute minimum, which is $2^b$, at two points, $\xi=2$ and
$\xi=0$, but only $\xi=2$ is in $[1,\la]$. Further
\[
|2-\xi|^b +|\xi|^b=2^b+|2-\xi|^b +O(|2-\xi|),\qquad \xi\to 2.
\]
Therefore
\begin{multline*}
\frac{1}{c^2b}\int_2\sim 2^{a+b-1}|x|^{2a+b}\int\limits_{1}^{\la}|2-\xi|^a\exp\Bigl(-|x|^b\bigl(2^b+|2-\xi|^b\bigr)\Bigr)\,d\xi\\
=2^{a+b-1}|x|^{2a+b} e^{-|2x|^b}\int\limits_{1}^{\la}|2-\xi|^ae^{-|x(2-\xi)|^b}\,d\xi\\
=2^{a+b-1}|x|^{2a+b} e^{-|2x|^b}\int_{-1}^{\la-2}\eta^ae^{-|x\eta|^b}\,d\eta\\
\sim2^{a+b}|x|^{a+b-1}e^{-|2x|^b}\int_0^{\infty}z^a e^{-z^b}\,dz\\
= \frac{2^{a+b}}{b} \Gamma\Big(\frac{a+1}{b}\Bigr) |x|^{a+b-1} e^{-|2x|^b}.
\end{multline*}
In combination with \eqref{int_1<}, the constraint $\la>2$ and \eqref{P^2}, this completes the proof  for $b<1$. 

Consider $b=1$. Starting with \eqref{int_2,bleq1}, a similar work shows that, for $a>-1$,  
\[
\frac{1}{c^2b}\int_2\sim |x|^{2a+b}e^{-2|x|}\int_1^2(2-\xi)^a\xi^{a+b-1}\,d\xi.
\]
So, again by \eqref{int_1<}, $\int_1/\int_2$ is of order, roughly, $e^{-(\la-2)|x|}$, and by \eqref{P^2}, $\pr^2\{M_{i,j}\le x\}/\int_2$ is of order $|x|^{-b}$.}
\end{proof}
{
\noindent Lemma \ref{FOK} immediately yields the counterparts of Theorems \ref{caseb>1} and \ref{integral<}.
\begin{Theorem}\label{G,integral<} Let $b\le1$ and $a>-1$. For 
\[
a'=\left\{\begin{aligned}
&a+b-1,&&\text{ if }\,0<b<1,\\
&2a+1,&&\text{ if }\,b=1,\end{aligned}\right.
\quad
\sigma>\sigma(a',b):=\left\{\begin{aligned}
&1+\frac{|a'|+1}{b},&&\text{ if }a'\le 0,\\
&1+\frac{2a'+1}{b},&&\text{ if } a'>0,\end{aligned}\right.
\]
and all $k$ between
$\lfloor \log^{\sigma}n\rfloor$ and $k_n=\lceil \a n^{1/2}\rceil$, ($\a>e\sqrt{2}$), we have
\[
\pr\bigl\{K_n=k+1\bigr\} \le \exp\bigl(-\log^{1+d'} n\bigr),\quad\forall\, d'<\frac{b(\sigma-\sigma(a',b))}{2},\\
\]
\end{Theorem}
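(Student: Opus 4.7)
The plan is to reduce the asymmetric-matrix model to Theorem~\ref{integral<} applied to the non-diagonal distribution $F$, using the dominance inequality \eqref{weak} to replace the integrand $(1-G(X))^n$ by $(1-F(X))^n$ on the event $\{X<-\mathcal S\}$, where $X:=k^{-1}\sum_{j=1}^k F^{-1}(U_j)$.

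First I would invoke Lemma~\ref{FOK}: under the hypotheses $b\le 1$ and $a>-1$, the non-diagonal distribution $F(x)=(G\star G)(2x)$ satisfies the asymptotic \eqref{F(-infty)} with exponent parameter $b$, tail-power $a'$ as in the statement, and scale $r'=2^b>1$. A trivial rescaling $x\mapsto 2^{-1}x$ normalizes $r'=1$ without changing $b$ or $a'$, so Theorem~\ref{integral<} applies verbatim to $F$ with threshold $\sigma(a',b)$.

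Next, starting from the identity of \cite{ChenPeng2015},
\[
\pr\{\bar W_k\le kV_1\}=\ex\bigl[(1-G(X))^n\bigr],
\]
split the expectation into $\{X<-\mathcal S\}$ and $\{X\ge -\mathcal S\}$ for a fixed large $\mathcal S$. On the first set, \eqref{weak} gives $(1-G(X))^n\le (1-F(X))^n$, and Theorem~\ref{integral<} applied to $F$ with parameters $(a',b)$ bounds the contribution by $\exp(-\log^{1+d'}n)$ for every $d'<b(\sigma-\sigma(a',b))/2$. On the second set, the Chernoff-style estimates in Section~3.2.2, which depend only on the leading tail asymptotics already supplied by Lemma~\ref{FOK}, show the event itself has probability $\le \exp(-\log^{1+d'}n)$ once $k\ge \lfloor\log^{\sigma}n\rfloor$, since Lemma~\ref{b<1} forces $|X|^b$ to be of order $\log n$ with super-polynomially high probability. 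Combining the two contributions and promoting via \eqref{P(Kn=k+1)<rho(n,k)} costs only a harmless extra factor $n$ which is absorbed by a slight decrease of $d'$.

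The main obstacle is the boundary case $b=1$, which is not literally covered by Theorem~\ref{integral<}. However, the sub-additivity step \eqref{<T_1^b+T_2^b} used there degenerates into equality at $b=1$, so the argument underlying Lemma~\ref{b<1} goes through unchanged for $F$, and the same bound is obtained. A secondary technicality is confirming that the Chernoff estimates on the sums $\sum_i(\log 1/w_i)^{1/b}$ still control $X$ when $F^{-1}$ is used in place of $G^{-1}$; this follows because $F^{-1}$ and $G^{-1}$ share the same leading asymptotics up to the constant scale factor $r'=2^b$, and the entire machinery of Section~3.2.2 depends only on such leading-order behaviour.
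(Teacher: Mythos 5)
Your proposal is correct and follows essentially the same route as the paper: invoke Lemma~\ref{FOK} to transfer the tail asymptotic \eqref{F(-infty)} to $F=(G\star G)(2x)$ with the new exponent $a'$, rescale to normalize the constant $r'=2^b$, use the ChenPeng2015 identity $\pr\{\bar W_k\le kV_1\}=\ex[(1-G(X))^n]$ together with the dominance \eqref{weak}, and then apply the Section~3.2.2 machinery (Lemma~\ref{b<1}, Theorem~\ref{integral<}) verbatim to $F$. Your note that the subadditivity step \eqref{<T_1^b+T_2^b} degenerates to equality at $b=1$, so that the ``Case $b<1$'' argument extends to the endpoint, matches what the paper takes for granted in passing from Theorem~\ref{integral<} to Theorem~D.

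One small inaccuracy in the bookkeeping: you justify $\pr\{X\ge -\mathcal S\}$ being super-polynomially small by citing Lemma~\ref{b<1} and the fact that it ``forces $|X|^b$ to be of order $\log n$''. Lemma~\ref{b<1} (via \eqref{P(|X|^b<)}) only gives an \emph{upper} bound on $|X|^b$, i.e.\ a \emph{lower} bound on the negative quantity $X$, which is the opposite of what is needed to conclude $X<-\mathcal S$. The correct (and simpler) justification is that $\{X\ge -\mathcal S\}$ is contained in $\{U_k>\delta_n\}$ once $n$ is large (since $U_k\le\delta_n$ forces $X\le F^{-1}(\delta_n)\to-\infty$), and \eqref{Uk<delta} gives $\pr\{U_k>\delta_n\}\le n^{-k}\le\exp(-\log^{\sigma+1}n)$, which is negligible in the stated range $k\ge\log^\sigma n$. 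With that substitution the proof is complete and agrees with the paper's sketch.
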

\begin{Theorem}\label{G,b>1} Let $b>1$. Then,  for all $k\le k_n$, we have
\begin{align*}
\pr\bigl\{K_n=k+1\bigr\}&\le_b n\left(\frac{8}{9}\right)^{k/4} +n\exp\left(-\frac{k\left(\log\frac{n}{k}\right)^{\min\{0,a'/b\}}}{2e}\right),\\
a'&:=2a+\frac{b}{2}. 
\end{align*}
Consequently $K_n=O_p\bigl((\log n)^{\max(1,1/2-2a/b)}\bigr)$. So for the quasi-normal case $a=-1$, $b=2$ we have
$K_n=O_p\bigl(\log^{3/2} n\bigr)$.
\end{Theorem}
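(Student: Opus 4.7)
The plan is to reduce this directly to Theorem \ref{caseb>1} via two ingredients already in place. Start from the identity of Chen and Peng (cited just before \eqref{weak}),
\[
\pr\{\bar W_k\le kV_1\}=\E\Biggl[\Biggl(1-G\Biggl(\frac{1}{k}\sum_{j=1}^k F^{-1}(U_j)\Biggr)\Biggr)^{\!n}\Biggr],
\]
and use the domination \eqref{weak}, itself a consequence of Lemma \ref{FOK} and the limit \eqref{frac=infty}, to replace $G$ by $F$ whenever $X:=k^{-1}\sum_{j=1}^k F^{-1}(U_j)<-\mathcal S$ for a suitable fixed $\mathcal S>0$. Since $F^{-1}(u)\to-\infty$ as $u\downarrow 0$ and typical $U_j$ is of order $j/n$, we have $X=\Theta\bigl(-(\log n)^{1/b}\bigr)$ with overwhelming probability, so the event $X\ge -\mathcal S$ contributes only a negligible term that will be absorbed into the $(8/9)^{k/4}$ bound below.

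Having reduced to the situation of Theorem \ref{caseb>1} but with $F$ in the role of the ambient distribution, whose tail parameters (by Lemma \ref{FOK}) are $a'=2a+b/2$ and rate $r'=2^{\min(1,b)}=2$ instead of the normalised $r=1$, I would rerun the argument of Lemma \ref{normal}. The rate $r'=2$ enters only through $F^{-1}(u)\sim -(r')^{-1/b}\bigl[\log(1/u)\bigr]^{1/b}$, and is then exactly cancelled by the $r'|X|^b$ appearing in the tail exponent of $F(X)$; what survives is
\[
\phi(\bold U):=F(X)\gtrsim S(\bold U)^{\min(0,a'/b)}\exp(-S(\bold U)),
\]
which is precisely the conclusion of Lemma \ref{normal} with $a$ replaced by $a'$. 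The residual effect of the new constants $c'$ and $r'$ is only to absorb a bounded factor into the symbol $\lesssim_b$.

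With this inequality in hand, the standard splitting $\{S(\bold U)>S\}\cup\{S(\bold U)\le S\}$ at $S=\log(n/(\alpha k))$, with $\alpha=2/(3e)$ and $\beta=1/4$ as in Theorem \ref{caseb>1}, handled by Lemma \ref{A1<S<A2} for the former event and by the above pointwise estimate for the latter, yields
\[
\pr\{\bar W_k\le kV_1\}\le_b \Bigl(\tfrac{8}{9}\Bigr)^{k/4}+\exp\!\left(-\frac{k\bigl(\log(n/k)\bigr)^{\min(0,a'/b)}}{ce}\right)
\]
for any $c\in(3/2,2)$; multiplying by $n$ via \eqref{P(Kn=k+1)<rho(n,k)} produces the stated bound on $\pr\{K_n=k+1\}$. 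The consequence $K_n=O_p\bigl((\log n)^{\max(1,\,1/2-2a/b)}\bigr)$ then follows by choosing $k=\omega(n)(\log n)^{\max(1,\,1/2-2a/b)}$ with $\omega(n)\to\infty$ arbitrarily slowly: since $1-a'/b=1/2-2a/b$, the exponent in the second term is of order $\omega(n)\log n$, so after summing over $k\le k_n$ the bound is $o(1)$; the quasi-normal specialisation $a=-1$, $b=2$ yields $a'=-1$ and exponent $\max(1,3/2)=3/2$, matching the claim.

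The only potential obstacle, and a very mild one, is verifying that the rate factor $r'=2$ in the tail of $F$ does not alter the final answer. This is essentially automatic from the structure of \eqref{F(-infty)}, because the two appearances of $r'$ (once in $F^{-1}$, once in $F$) cancel to preserve the exponent $e^{-S(\bold U)}$; everything else is a literal transcription of the $b>1$ case from Theorem \ref{caseb>1}, now driven by $a'$ rather than $a$.
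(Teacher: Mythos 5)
Your proposal is correct and follows essentially the same route as the paper, which proves Theorem \ref{G,b>1} by observing that Lemma \ref{FOK} and the domination \eqref{weak} reduce the claim to the argument of Theorem \ref{caseb>1} with $a$ replaced by $a'=2a+b/2$. The one point you handle explicitly — that the rate factor $r'=2^{\min(1,b)}$ cancels between $F^{-1}$ and $F$ in $\phi(\bold U)=F\bigl(k^{-1}\sum F^{-1}(U_j)\bigr)$, so only $a'$ survives — is precisely the reason the paper's "immediately yields" is justified (equivalently, one may rescale $x$ to normalize $r'$ to $1$ before invoking Lemma \ref{normal}); your derivation of the exponent $\max(1,\,1-a'/b)=\max(1,\,1/2-2a/b)$ and the quasi-normal specialisation are also correct.
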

}
{

{\bf Note.\/} The reader certainly noticed that the inequality \eqref{weak} is weaker than the inequality \eqref{frac=infty}.
It may well be possible to lower the powers of $\log n$ in the
likely order of $K_n$ by using \eqref{frac=infty} fully, but the additional technicalities look to be disproportionately
high. 
}

{
\section{Conclusion}\label{sec:conclusion}
Our results, together with \cite{int:Peng-StQP} and \cite{ChenPeng2015}, demonstrate that the optimal solutions of randomly generated StQPs are sparse under very general distribution assumptions. It would be interesting to extend our analysis to portfolio selection problems in which the variance of a portfolio of assets with random returns is minimized (\cite{portfolioselect}) so as to diversify the investment risk. However, it has been observed empirically in the literature (see for instance \cite{int:Opt-Finance}, \cite{GaoLi2013} and \cite{GreenHollifield1992}) that this does not lead to the diversification one would have expected, i.e., the solutions are usually quite sparse, when the empirical covariance matrices are constructed from real data. Our results and/or methodologies may allow us to provide an understanding of the sparsity of portfolio selection problems theoretically. 

The sparsity of solutions holds beyond the randomly generated StQPs (\cite{ChenTeo2013}). See also the references 
to $L_1$ minimization research in the introduction. 
It would be important to identify a broader class of random quadratic
optimization problems with many linear constraints that are likely to have solutions close to an extreme point of the attendant polyhedron. 

It would also be interesting to explore how sparsity can be employed to facilitate the design of algorithms {
that are efficient on average.} One possibility is { to sift through} all possible supports whose sizes are no more than the likely (polylogarithmic) upper bound in our theorems. {Our results indicate that, typically, the running time of even such a primitive  algorithm is of order $\exp(\log^{\alpha} n)$, i.e. well below an exponential order.
In this context, we refer the reader to Ye et al. \cite{YeJuLei2017} who develop a homotopy method for solving a sequence of quadratic programs with slightly varying problem parameters. Their computational experiments demonstrate adaptability of the method to solution sparsity. }

\end{document}